\newtheorem{mydef}{Definition}[section]
\newtheorem{thm}{Theorem}[section]
\newtheorem{rmk}{Remark}[section]
\newtheorem{lemma}{Lemma}[section]
\newtheorem{propo}{Proposition}[section]
\numberwithin{equation}{section}
\def\N{{\rm I\kern-0.16em N}}
\def\R{{\rm I\kern-0.16em R}}
\def\E{{\rm I\kern-0.16em E}}
\def\P{{\rm I\kern-0.16em P}}
\def\F{{\rm I\kern-0.16em F}}
\def\B{{\rm I\kern-0.16em B}}
\def\C{{\rm I\kern-0.46em C}}
\def\G{{\rm I\kern-0.50em G}}
\newcommand{\ud}{\mathrm{d}}
\numberwithin{equation}{section}
\font\eka=cmex10
\def\ind{\mathrel{\hbox{\rlap{%
\hbox to 7.5pt{\hrulefill}}\raise6.6pt\hbox{\eka\char'167}}}}
\begin{document}
\title[Parameter estimation]
{Drift parameter estimation for \\fractional Ornstein-Uhlenbeck process \\of the Second Kind}
\author[Azmoodeh and Morlanes]{Ehsan Azmoodeh \and Jos\'e Igor Morlanes}
\address{Jos\'e Igor Morlanes \\ Statistiska institutionen, Stockholms Universiet\\
P.O. Box SE-10691 STOCKHOLM.} 
\email{jose.morlanes@stat.su.se}

\address{Ehsan Azmoodeh\\ Facult\'e des Sciences, de la Technologie et de la Communication,  Universit\'e du Luxembourg\\
P.O. Box L-1359 LUXEMBOURG.}
\email{ehsan.azmoodeh@uni.lu}
\begin{abstract}
 Fractional Ornstein-Uhlenbeck process of the second kind $(\text{fOU}_{2})$ is solution of the Langevin equation $\mathrm{d}X_t = -\theta X_t\,\mathrm{d}t+\mathrm{d}Y_t^{(1)}, \ \theta >0$ with driving noise 
$ Y_t^{(1)} := \int^t_0 e^{-s} \,\mathrm{d}B_{a_s}; \ a_t= H e^{\frac{t}{H}}$ where $B$ is a fractional Brownian motion with Hurst parameter $H \in (0,1)$. In this article, in the case $H>\frac{1}{2}$, we prove that the least squares estimator 
$\widehat{\theta}_T$ introduced in [\cite{h-n}, Statist. Probab. Lett. 80, no. 11-12, 1030-1038], provides a consistent estimator. Moreover, using central limit theorem for multiple Wiener integrals, we prove asymptotic normality 
of the estimator valid for the whole range $H \in(\frac{1}{2},1)$.

\medskip

\vskip0.5cm
\noindent
{\it Keywords:} fractional Brownian motion, fractional Ornstein-Uhlenbeck processes, Malliavin calculus, Langevin equation, least squares estimator

\smallskip

\noindent
{\it 2010 AMS subject classification:} 60G22, 60H07, 62F99  
\end{abstract}

\maketitle

\section{Introduction}
Assume $B=\{B_t\}_{ t\geq 0}$ is a fractional Brownian motion with Hurst parameter $H \in (0,1)$, i.e. a continuous, centered Gaussian process with covariance function
\begin{equation*}
 R_{H}(t,s) = \frac{1}{2}\{ t^{2H}+s^{2H} - |t-s|^{2H}\}.
\end{equation*}
Consider Gaussian process $ Y_t^{(1)} := \int^t_0 e^{-s} \,\mathrm{d}B_{a_s}$ with $a_t= H e^{\frac{t}{H}}$. The fractional Ornstein-Uhlenbeck process of the second kind $X$ with initial value $X_0$ is the solution of the Langevin equation 
\begin{equation}\label{LfOU2}
 \mathrm{d}X_t = -\theta X_t\,\mathrm{d}t+\mathrm{d}Y_t^{(1)},\quad   X_0= X_0, \quad \theta >0.
\end{equation}
The terminology \textit{``of the second kind''} is taken from Kaarakka and Salminen \cite{k-s}. The motivation behind the process $X$ is that it is related to Lamperti transformation of fractional Brownian motion. For more detailed information 
on fractional Ornstein-Uhlenbeck processes, see Subsection \ref{fOU}.\\ 
 
An interesting problem in mathematical statistics is to estimate the unknown parameter $\theta$ based on continuous observation of the sample paths of the process $X_t, \ t \in [0,T]$. In the case of fractional Ornstein-Uhlenbeck process 
of the first kind $(\text{fOU}_{1})$, that is replace driving noise $Y^{(1)}$ with fractional Brownian motion $B$ in the Langevin equation $(\ref{LfOU2})$, when $H > \frac{1}{2}$, Hu and Nualart \cite{h-n} suggested 
an estimator so called \textit{the least squares estimator} defined as
\begin{equation}\label{leastsquare}
\widehat \theta _T =-\frac{\int^T_0 X_t\,\delta X_t}{\int^T_0 X^2_t\,\ud t}
\end{equation}
where the stochastic integral is interpreted as Skorokhod integral $($see Subsection \ref{Malliavin} for definitions and notations$)$. The motivation comes from the 
following heuristic argument. The least squares estimator is obtain by minimizing the function
\begin{equation*}
 \theta \mapsto \int^T_0 \vert \dot{X_t} + \theta X_t \vert ^2 \ud t
\end{equation*}
which leads to the solution $(\ref{leastsquare})$. In the same setup as fractional Ornstein-Uhlenbeck process of the first kind, the maximum likelihood estimator of drift parameter $\theta$ is found by Kleptsyna and Le Breton \cite{k-l} and its 
strong consistency is proved. Estimation of drift parameter $\theta$ for fractional Ornstein-Uhlenbeck process of the first kind, in the non-ergodic case, i.e. $\theta<0$, is studied by Belfadli, Es-Sebaiy and Ouknine in \cite{b-e-o}.
In this paper, we consider the least squares estimator $\widehat{\theta}_T$ in the setup of the fractional Ornstein-Uhlenbeck process of the second kind. The key point is the Lemma \ref{alternative_X_t}. Using this lemma, we 
replace the Gaussian noise $Y^{(1)}$ with an equivalent $($in distribution$)$ noise and do computations in an equivalent model.\\

The paper is organized as follows. In the section \ref{prel}, we give a detailed information on fractional Ornstein-Uhlenbeck processes and Malliavin calculus for factional Brownian motion. Section \ref{main} is devoted to our main results. In the 
appendix section, we provide some auxiliary computation which is used in the proof of main results.

\section{Preliminaries}\label{prel}
It is well known that the classical Ornstein-Uhlenbeck process $U^{(\frac{1}{2},\xi_{0})}=\{ U_{t}^{(\frac{1}{2},\xi_{0})}\}_{t\geq 0}$ with initial value $\xi_0$ 
can be constructed as the unique solution of the Langevin SDE
\begin{equation}\label{Langevin_B}
 dU_{t}^{(\frac{1}{2},\xi_{0})} = -\theta U_{t}^{(\frac{1}{2},\xi_{0})} dt + dW_t, \quad t \ge 0,
\end{equation}
where $\theta > 0 $ and $W=\{W_t\}_{ t \ge 0}$ is standard Brownian motion. The solution of SDE (\ref{Langevin_B}) can be expressed as 
\begin{equation*}
U_{t}^{(\frac{1}{2},\xi_{0})}:= e^{- \theta t} \left( \xi_{0} + \int_{0}^{t} e^{\theta s} \ud W_s\right), \quad t\ge 0.
\end{equation*}

Let $\hat{W}$ denote a two sided Brownian motion defined as
\begin{equation}\label{two-sided}
 \hat W_t := \begin{cases}
  W_t & \text{for $t\geq 0$} \\
 W^{(-)}_{-t}  & \text{for $t\le 0$},
 \end{cases}
\end{equation}
where $W^{(-)}_t=\{W^{(-)}_t; t\geq 0\}$ is another Brownian motion initiated at 0 and independent of $W$. The selection 
$\xi_{0}= \int_{-\infty}^{0} e^{\theta s} \ud \hat{W}_s$ leads to the unique solution $U^{(\frac{1}{2})}$ given by 
\begin{equation*}
U_{t}^{(\frac{1}{2})}:= \int_{-\infty}^{t} e^{- \theta (t-s)} \ud \hat{W}_s, \quad t\ge 0.
\end{equation*}
It is a stationary, continuous Gaussian process with covariance function 
\begin{equation*}
\text{Cov} ( U_{s}^{(\frac{1}{2})}, U_{t}^{(\frac{1}{2})} ) = \frac{1}{2 \theta} e^{- \theta |t-s|}, \quad \forall s,t \ge 0.
\end{equation*}
On the other hand, it is known that the classical Ornstein-Uhlenbeck process $ U^{(\frac{1}{2})}$ can be reconstructed from Brownian motion $W$ by Lamperti transformation. For $\alpha > 0$, 
define the process
\begin{equation*}
Z_{t}^{(\frac{1}{2})} := e^{- \theta t} W_{\alpha e^{2 \theta t}}, \quad t \in \R.
\end{equation*}
Then $Z^{(\frac{1}{2})}$ is a stationary, Gaussian process with covariance function
\begin{equation*}
\text{Cov}(Z_{t}^{(\frac{1}{2})},Z_{s}^{(\frac{1}{2})}) = \alpha e^{- \theta |t-s|}, \quad s,t \in \R.
\end{equation*}
Since, finite dimensional distributions of a Gaussian process are completely characterized by its mean and covariance functions, therefore with
$\alpha= \frac{1}{2 \theta}$, we have $U^{(\frac{1}{2})} \stackrel{\text{law}}{=} Z^{(\frac{1}{2})} $ where $\stackrel{\text{law}}{=}$ stands for equality in law. For more information 
on Lamperti transformation and related topics we refer to the book \cite{e-m}.

\subsection{Fractional Ornstein-Uhlenbeck processes}\label{fOU}

In this subsection, we briefly introduce fractional Ornstein-Uhlenbeck processes. The main references are \cite{C-K-M} and \cite{k-s}. We mainly focus on fractional Ornstein-Uhlenbeck process of the second kind that is the core
stochastic process of the article. To obtain fractional 
Ornstein-Uhlenbeck processes, replace the Brownian motion $W$ with the fractional Brownian motion $B$ in Langevin equation (\ref{Langevin_B}). Consider the following 
stochastic differential equation

\begin{equation}\label{Langevin-first}
 dU^{(H,\xi_0)}_t =-\theta U^{(H,\xi_0)}_t dt + dB_t
\end{equation}

with initial value $\xi_0$, The solution of the following SDE can be expressed as

\begin{equation}\label{generalfOU1}
U^{(H,\xi_0)}_t = e^{-\theta t} \left( \xi_{0} + \int^t_0 e^{\theta s}\,\ud B_s\right).
\end{equation}

Note that the stochastic integral is understood as a path-wise Riemann-Stieltjes integral. Let $\hat{B}$ denote a two sided fractional Brownian motion $($see \ref{two-sided}$)$. The selection
\begin{equation*}
\xi_{0} := \int^0_{-\infty} e^{\theta s}\,\ud \hat B_s
\end{equation*}
for the initial value $\xi_0$ leads to an unique stationary, Gaussian process $U^{(H)}$ of the form 

\begin{equation}\label{U^{(H)}}
U^{(H)}_t = e^{-\theta t} \int^t_{-\infty} e^{\theta s}\,\ud \hat B_s.
\end{equation}

\begin{mydef}
We call the process $U^{(H,\xi_0)}$ given by $(\ref{generalfOU1})$ a fractional Ornstein-Uhlenbeck process of the first kind with initial value $\xi_0$. The process $U^{(H)}$ defined in $(\ref{U^{(H)}})$ is called 
stationary fractional Ornstein-Uhlenbeck process of the first kind.
\end{mydef}

\begin{rmk}
 It is shown in \cite{C-K-M} that the covariance function of the stationary process $U^{(H)}$ decays like a power function, so it is ergodic and for $H \in (\frac{1}{2},1)$, it 
exhibits long range dependence.
\end{rmk}

Now, we define a new stationary, Gaussian process $X^{(\alpha)}$ by means of Lamperti transformation of fractional Brownian motion $B$:
\begin{equation*}
 X_t^{(\alpha)} := e^{-\alpha t}B_{a_t},\quad t\in \R,
\end{equation*}
where $\alpha>0$ and $a_t= \frac{H}{\alpha}e^{\frac{\alpha t}{H}}$. We aim to represent the process $X^{(\alpha)}$ as solution of a Langevin type SDE. For this reason, consider 
the process $Y^{\alpha}$ defined via
\begin{equation*}
 Y_t^{(\alpha)} := \int^t_0 e^{-\alpha s} \,\mathrm{d}B_{a_s}, \quad t \ge 0.
\end{equation*} 
The stochastic integral is understood in as path-wise Riemann-Stieltjes integral. Using the self-similarity property of fractional Brownian motion one can see that the process $Y^{(\alpha)}$ satisfies in the following scaling property

\begin{equation}\label{scaling}
 \{ Y^{(\alpha)}_{t / \alpha} \}_{t \ge 0} \stackrel{\text{law}}{=} \{ \alpha^{-H} Y^{(1)}_{t}\}_{t \ge 0}.
\end{equation}
Using $Y^{(\alpha)}$ the process $ X^{(\alpha)}$ can be viewed as the solution of the following Langevin type SDE
\begin{equation*}
 \mathrm{d}X_t^{(\alpha)} = -\alpha X_t^{(\alpha)}\,\mathrm{d}t+\mathrm{d}Y_t^{(\alpha)},
\end{equation*}
with random initial value $X_0^{(\alpha)}=B_{a_0}\stackrel{d}{=}B_{H/\alpha}\sim N(0, (\frac{H}{\alpha})^{2H})$.\\

Inspired by the scaling property $($\ref{scaling}$)$, we consider the following Langevin equation with $Y^{(1)}$ as the driving noise:
\begin{equation}
\label{Langevin}
 \mathrm{d}X_t = -\theta X_t\,\mathrm{d}t+\mathrm{d}Y_t^{(1)},\qquad   \theta > 0.
\end{equation}



The solution of the SDE $(\ref{Langevin})$ is given by 
\begin{equation}\label{generalfOU2}
X_t = e^{- \theta t} \left( X_0 + \int_{0}^{t} e^{\theta s} \,\mathrm{d} Y^{(1)}_s \right) = e^{- \theta t} \left( X_0 + \int_{0}^{t} e^{(\theta -1)s} \,\mathrm{d} B_{a_s} \right)
\end{equation}
with $\alpha=1$ in $a_t$. Notice that the stochastic integral is understood as path-wise Riemann-Stieltjes integral. The selection $X_0 = \int^0_{-\infty} e^{(\theta-1) s} \,\mathrm{d}B_{a_s}$ leads to an unique stationary, Gaussian process 
\begin{equation}\label{U}
 U_t= e^{-\theta t}\int^t_{-\infty} e^{(\theta-1) s} \,\mathrm{d}B_{a_s}.
\end{equation}

\begin{mydef}
We call the process $X$ given by $(\ref{generalfOU2})$ a fractional Ornstein-Uhlenbeck process of the second kind with initial value $X_0$. The process $U$ defined in $(\ref{U})$ is called the stationary fractional Ornstein-Uhlenbeck process of the second kind.
\end{mydef}

\begin{propo}\cite{k-s}
The covariance function of the stationary process $U$ decays exponentially and has short range dependence.
\end{propo}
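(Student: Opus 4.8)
The plan is to compute the covariance of the stationary process $U$ explicitly as a deterministic double integral against the covariance kernel of fractional Brownian motion, and then read off its large-lag asymptotics. First I would remove the implicit time change in $(\ref{U})$ by substituting $x=a_s=He^{s/H}$ directly in the Riemann--Stieltjes integral, so that the integrator $B_{a_s}$ becomes $B_x$ while the weight becomes $e^{(\theta-1)s}=(x/H)^{H(\theta-1)}$. Writing $\gamma := H(\theta-1)$, this gives
\begin{equation*}
U_t = e^{-\theta t}\,H^{-\gamma}\int_0^{a_t} x^{\gamma}\,\ud B_x .
\end{equation*}
Since $\theta>0$ and $H<1$ force $\gamma=H(\theta-1)>-H>-1$, and since $H>\tfrac{1}{2}$, this is a well-defined centered Gaussian (Wiener) integral.

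Next, using the standard inner-product identity for Wiener integrals against $B$ valid when $H>\tfrac{1}{2}$ (see Subsection \ref{Malliavin}), namely $\E\big[\int f\,\ud B\int g\,\ud B\big]=H(2H-1)\iint f(x)g(y)|x-y|^{2H-2}\,\ud x\,\ud y$, I obtain for $t\ge t'$
\begin{equation*}
\E[U_t U_{t'}] = H(2H-1)\,H^{-2\gamma}\,e^{-\theta(t+t')}\int_0^{a_t}\!\!\int_0^{a_{t'}}(xy)^{\gamma}|x-y|^{2H-2}\,\ud x\,\ud y .
\end{equation*}
The singularity at $x=y$ is integrable precisely because $2H-2>-1$, and the weight $(xy)^{\gamma}$ is integrable at the origin because $\gamma>-1$; this is exactly where $H>\tfrac{1}{2}$ enters. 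To exhibit stationarity I would rescale $x=a_{t'}\xi$, $y=a_{t'}\eta$ and set $k:=a_t/a_{t'}=e^{(t-t')/H}$. The powers of $a_{t'}=He^{t'/H}$ then combine with $e^{-\theta(t+t')}$ (using $(2\gamma+2H)/H=2\theta$) to collapse everything to a function of the lag $r:=t-t'$ alone:
\begin{equation*}
g(r):=\E[U_t U_{t'}] = H^{2H+1}(2H-1)\,e^{-\theta r}\,\Psi\!\left(e^{r/H}\right),\qquad \Psi(k):=\int_0^{k}\!\!\int_0^{1}(\xi\eta)^{\gamma}|\xi-\eta|^{2H-2}\,\ud\xi\,\ud\eta .
\end{equation*}
This already shows that $U$ is centered Gaussian and stationary.

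The heart of the argument is the behaviour of $\Psi(k)$ as $k\to\infty$, which is governed by the inner integral $\int_0^{k}\xi^{\gamma}|\xi-\eta|^{2H-2}\,\ud\xi$ whose integrand decays like $\xi^{\gamma+2H-2}$ at infinity. Two regimes arise. If $\gamma+2H-1<0$, equivalently $H(\theta+1)<1$, the $\xi$-integral converges, so $\Psi(k)\to\Psi(\infty)<\infty$ and $g(r)\sim c\,e^{-\theta r}$. If $\gamma+2H-1>0$, equivalently $H(\theta+1)>1$, then $\Psi(k)\sim c\,k^{\gamma+2H-1}$, and since $(\gamma+2H-1)/H=\theta+1-\tfrac{1}{H}$ one gets $g(r)\sim c\,e^{(1-1/H)r}$, i.e. decay at rate $1/H-1$ (the borderline case $H(\theta+1)=1$ contributes only a polynomial-in-$r$ correction, and the two rates coincide there). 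In all cases
\begin{equation*}
|g(r)|\le C\,e^{-\rho r},\qquad \rho:=\min\!\left(\theta,\ \tfrac{1}{H}-1\right)>0,
\end{equation*}
so the covariance decays exponentially. Finiteness of $\int_0^\infty|g(r)|\,\ud r$ is then immediate, which is exactly the short-range-dependence assertion.

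The main obstacle I anticipate is the asymptotic analysis of $\Psi(k)$: one must control the contribution of the moving singularity at $\xi=\eta$ uniformly in $k$ and cleanly separate it from the tail $\xi\to\infty$, so that the two parameter regimes and the exact exponential rate $\min(\theta,1/H-1)$ emerge rigorously rather than heuristically. Everything else — the substitution $x=a_s$, the bookkeeping of exponential prefactors that yields stationarity, and the passage from exponential decay to integrability — is routine.
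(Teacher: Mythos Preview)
Your approach is correct and is essentially the computation that underlies this result in the reference the paper cites. Note, however, that the paper does not give its own proof of this proposition: it is simply quoted from Kaarakka--Salminen \cite{k-s}. The covariance formula you derive,
\[
c(r)=\text{Cov}(U_r,U_0)=H(2H-1)\,H^{2H(1-\theta)}\,e^{-\theta r}\int_0^{a_r}\!\!\int_0^{a_0}(xy)^{H(\theta-1)}|x-y|^{2H-2}\,\ud x\,\ud y,
\]
coincides exactly with the one the paper records (from Proposition~3.11 of \cite{k-s}) at the start of Lemma~\ref{Ua.s.} in the appendix. There the double integral is split into pieces that make your two regimes explicit and yield, for large $r$, a contribution of order $e^{-\theta r}$ and one of order $e^{-(1/H-1)r}$, so the decay rate is indeed $\min(\theta,\,1/H-1)$ as you state.

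The ``obstacle'' you anticipate --- controlling the diagonal singularity $\xi=\eta$ uniformly in $k$ --- is not a genuine difficulty: that singularity lives entirely in $[0,1]^2$ and contributes only a fixed finite constant to $\Psi(k)$; the growth of $\Psi(k)$ comes solely from the region $\xi>1$, $\eta\in[0,1]$, where the integrand is smooth and behaves like $\xi^{\gamma+2H-2}$. So the split into a bounded singular part plus a smooth tail is immediate, and your asymptotics for $\Psi(k)$ follow directly.
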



\subsection{Malliavin calculus with respect to fractional Brownian motion}\label{Malliavin}
In this subsection, we briefly introduce some basic facts on Malliavin calculus with respect to fractional Brownian motion. We also recall some required results 
related to fractional Brownian motion which we need for the proof of our main theorem. The main references are \cite{a-m-n}, \cite{n-p} and \cite{nu1}.\\

Assume $B=\{ B_t \}_{t\in [0,T]}$ is a fractional Brownian motion with Hurst parameter $H \in (0,1)$ and covariance function $R_{H}(t,s)$. It is well-known that the covariance function $R_H$ can be represented as
\begin{equation*}
 R_{H}(t,s) = \int_{0}^{t \wedge s} K_{H}(t,u) \ K_{H}(s,u) \ud u
\end{equation*}
for a Volterra-type square integrable kernel $K_H$. In the case $H>\frac{1}{2}$, the kernel $K_H$ has a simple expression given by
\begin{equation*}
K_{H}(t,s)= c_H s^{\frac{1}{2} - H} \int_{s}^{t} (u - s)^{H - \frac{3}{2}} u^{H - \frac{1}{2}} \ud u
\end{equation*}
where $c_H = (H - \frac{1}{2}) \Big( \frac{2H \Gamma(\frac{3}{2} - H)}{ \Gamma(H + \frac{1}{2}) \Gamma(2- 2H)} \Big)^{\frac{1}{2}}$.
Consider the set $\mathcal{E}$ of all step functions on $[0,T]$. We define the Hilbert space $\mathcal{H}$ as the closure of $\mathcal{E}$ with respect
to inner product
\begin{equation*}
 \langle \mathbf{1}_{[0,t]},\mathbf{1}_{[0,s]} \rangle_{\mathcal{H}} = R_{H}(t,s).
\end{equation*}
Then, the mapping $\mathbf{1}_{[0,t]} \mapsto B_t$ can be extended to an isometry between Hilbert space $\mathcal{H}$ and Gaussian space $\mathcal{H}_1$ associated with 
fractional Brownian motion $B$. We denote this isometry by $\varphi \mapsto B(\varphi)$. It is known that when $H= \frac{1}{2}$, we have $\mathcal{H}=L^{2}[0,T]$, 
whereas for $H>\frac{1}{2}$, the elements of $\mathcal{H}$ may be not functions but distributions of negative order $($see \cite{p-t}$)$. An interesting subspace
$\mathcal{|H|}$ containing only functions is the set of all measurable functions $\varphi$ on $[0,T]$ such that
\begin{equation*}
 \Vert \varphi \Vert_{|\mathcal{H}|} ^{2} := \alpha_H \int_{0}^{T} \int_{0}^{T} \vert \varphi (s)\vert  \vert \varphi(t)\vert  |t-s|^{2H-2} \ud s \ud t < \infty,
\end{equation*}
where $\alpha_H = H(2H-1)$. Notice that for any two measurable functions $\varphi, \psi \in |\mathcal{H}|$, we have
\begin{equation}\label{isometryfbm}
 \E \big( B(\varphi) B(\psi) \big)= \langle \varphi, \psi \rangle _{\mathcal{H}} = \alpha_H \int_{0}^{T} \int_{0}^{T} \varphi (s) \psi(t) |t-s|^{2H-2} \ud s \ud t.
\end{equation}
 Notice that when $H>\frac{1}{2}$, we have the inclusions  $L^{2}[0,T] \subset L^{\frac{1}{H}}[0,T] \subset |\mathcal{H}| \subset \mathcal{H}$. Consider the linear 
operator $K^{*}_{H}$ between $\mathcal{E}$ and $L^{2}[0,T]$ defined by
\begin{equation*}
 (K^{*}_{H}\varphi)(s)= \int_{s}^{T} \varphi(t) \frac{\partial K_H}{\partial t} (t,s) \ud t.
\end{equation*}
Notice that the operator $K^{*}_{H}$ is an isometry between $\mathcal{E}$ and $L^{2}[0,T]$ that can be extended to the Hilbert space $\mathcal{H}$. Moreover, the processes
$W= \{ W_t \}_{t \in [0,T]}$ given by
\begin{equation}\label{BW}
 W_t := B \big( (K^{*}_{H})^{-1} (\textbf{1}_{[0,t]})\big)
\end{equation}
defines a Brownian motion. The fractional Brownian motion $B$ and Brownian motion $W$ relate through the integral representation
\begin{equation}\label{representation FBW}
B_t = \int_{0}^{t} K_{H}(t,s) \ud W_s.
\end{equation}


Consider the space $\mathcal{S}$ of all smooth random variables of the form 
\begin{equation}\label{eq:smooth}
F= f(B(\varphi_1), \cdots, B(\varphi_n)), \qquad \varphi_1, \cdots, \varphi_n \in \mathcal{H},
\end{equation}
where $f \in C_{b}^{\infty}(\R^n)$. For any smooth random variable $F$ of the form \ref{eq:smooth}, we define its Malliavin derivative $D^{(B)}= D $ as an element of 
$L^{2}(\Omega;\mathcal{H})$ by
\begin{equation*}
D F= \sum_{i=1}^{n} \partial_{i} f (B(\varphi_1), \cdots, B(\varphi_n)) \varphi_i.
\end{equation*}
In particular, $D B_t = \mathbf{1}_{[0,t]}$. We denote by $\mathbb{D}^{1,2}_{B}= \mathbb{D}^{1,2}$ the Hilbert space of all square integrable Malliavin derivative random variables as the 
closure of the set $\mathcal{S}$ of smooth random variables with respect to norm 
\begin{equation*}
\Vert F \Vert_{1,2}^{2} = \E |F|^{2} + \E ( \Vert D F \Vert_{\mathcal{H}} ^{2}).
\end{equation*}
The \textit{transfer principle} connects the Malliavin operators of the both processes $B$ and $W$. This is the message of the next proposition.
\begin{propo}\label{transfer}\cite{nu1}
 For any $F \in \mathbb{D}_{W}^{1,2}= \mathbb{D}^{1,2}$,
\begin{equation*}
 K_{H}^{*} D F = D^{(W)} F,
\end{equation*}
where $D^{(W)}$ denotes the Malliavin derivative operator with respect to Brownian motion $W$, and $\mathbb{D}_{W}^{1,2}$ the corresponding Hilbert space.
\end{propo}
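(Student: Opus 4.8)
The plan is to establish the identity first on the dense class $\mathcal{S}$ of smooth random variables and then extend it to all of $\mathbb{D}^{1,2}$ by a closure argument, the key algebraic input being that $K_H^*$ intertwines the two isonormal Gaussian structures associated with $B$ and $W$. The starting point is the compatibility relation between the two processes: since $W_t = B\big((K_H^*)^{-1}(\mathbf{1}_{[0,t]})\big)$ by $(\ref{BW})$, linearity and the density of step functions give $W(h) = B\big((K_H^*)^{-1} h\big)$ for every $h \in L^2[0,T]$, or equivalently
\begin{equation*}
B(\varphi) = W(K_H^* \varphi), \qquad \varphi \in \mathcal{H}.
\end{equation*}
This says precisely that $K_H^*$, viewed as an isometry of $\mathcal{H}$ onto $L^2[0,T]$, carries the $B$-isonormal process onto the $W$-isonormal process.

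Next I would take a smooth random variable $F = f(B(\varphi_1), \dots, B(\varphi_n))$ with $f \in C_b^\infty(\R^n)$ and $\varphi_i \in \mathcal{H}$, and use the compatibility relation to rewrite it as a smooth functional of $W$, namely $F = f\big(W(K_H^*\varphi_1), \dots, W(K_H^*\varphi_n)\big)$ with directions $K_H^*\varphi_i \in L^2[0,T]$. Applying the definition of the Malliavin derivative with respect to $W$ yields
\begin{equation*}
D^{(W)}F = \sum_{i=1}^n \partial_i f\big(W(K_H^*\varphi_1),\dots,W(K_H^*\varphi_n)\big)\, K_H^*\varphi_i,
\end{equation*}
and since $W(K_H^*\varphi_i) = B(\varphi_i)$ and $K_H^*$ is linear, the right-hand side equals $K_H^*\big(\sum_{i} \partial_i f(B(\varphi_1),\dots,B(\varphi_n))\varphi_i\big) = K_H^* DF$. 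This establishes the identity on $\mathcal{S}$.

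To pass to the whole space I would invoke the isometry $\Vert K_H^*\varphi\Vert_{L^2[0,T]} = \Vert\varphi\Vert_{\mathcal{H}}$, which gives $\E\Vert D^{(W)}F\Vert_{L^2[0,T]}^2 = \E\Vert DF\Vert_{\mathcal{H}}^2$ on smooth variables, so the two graph norms $\Vert\cdot\Vert_{1,2}$ agree on $\mathcal{S}$. Hence the $\mathcal{S}$-closures coincide, yielding $\mathbb{D}_W^{1,2} = \mathbb{D}^{1,2}$, and for $F$ in this common space the relation $K_H^* DF = D^{(W)}F$ passes to the limit by continuity of $K_H^*$.

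I expect the delicate point to be the careful justification of the compatibility step, namely that $K_H^*$ is a genuine isometry of $\mathcal{H}$ onto $L^2[0,T]$ with a well-defined inverse on step functions, together with the subtlety that $DF$ a priori lives in $\mathcal{H}$, which for $H>\frac{1}{2}$ may contain distributions rather than functions, whereas $K_H^* DF$ must be read as a bona fide element of $L^2[0,T]$. Keeping the domains and the isometry bookkeeping straight is where the real work lies, the remaining manipulations being purely formal.
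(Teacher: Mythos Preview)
Your argument is correct and is in fact the standard proof of the transfer principle. Note, however, that the paper does not give its own proof of this proposition: it is stated with a citation to \cite{nu1} (Nualart's book) and used as a black box, so there is nothing to compare against beyond observing that your sketch reproduces the textbook argument.
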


The \textit{divergence operator} denoted by $\delta_{B} = \delta$ is the adjoint operator of Malliavin derivative operator $D$. For any $\mathcal{H}$-valued random variable 
$u \in L^{2}(\Omega;\mathcal{H})$ belongs to the domain $\text{Dom} \delta$ of the divergence operator $\delta$, then the random variable $\delta(u)$ is defined by
duality relationship
\begin{equation*}
 \E (F \delta(u)) = \E \langle D F, u \rangle_{\mathcal{H}}, \qquad \forall F \in \mathbb{D}^{1,2}.
\end{equation*}
 Also, an element $u \in L^{2}(\Omega;\mathcal{H})$ belongs to the domain $\text{Dom} \delta$ if and only if
\begin{equation*}
\big| \E \langle D F, u \rangle_{\mathcal{H}} \big| \le c_{u} \Vert F \Vert_{L^{2}}
\end{equation*}
for any $F \in \mathbb{D}^{1,2}$, where $c_u$ is just a constant depending on $u$. The divergence operator $\delta$ is also called Skorokhod integral. It is known that 
in the case of Brownian motion it coincides with Ito integral for adapted integrands. Hereafter for any $u \in \text{Dom}\delta$, we denote 
$\delta(u)= \int_{0}^{T} u_t \delta B_t$.\\

Following \cite{a-m-n} one can develop a Malliavin calculus for any continuous Gaussian process $G$ of the form 
\begin{equation*}
 G_t = \int_{0}^{t} K(t,s) \ud W_s
\end{equation*}
where $W$ is a Brownian motion and the kernel $K, \ 0<s<t<T$ satisfying $\sup_{t \in [0,T]} \int_{0}^{t}K(t,s)^{2} \ud s < \infty$. Consider the linear operator $K^{*}$ from $\mathcal{E}$ to $L^{2}[0,T]$ defined by
\begin{equation*}
 (K^{*} \varphi)(s) = \varphi(s)K(T,s) + \int_{s}^{T} \left[ \varphi(t) - \varphi(s) \right] K(\ud t,s).
\end{equation*}
The Hilbert space $\mathcal{H}$ generated by covariance function of the Gaussian process $G$ can be represented as $\mathcal{H} = (K^{*})^{-1} (L^{2}[0,T])$ and 
$\mathbb{D}^{1,2}_{G}(\mathcal{H}) =(K^{*})^{-1} \big(\mathbb{D}^{1,2}_{W}(L^{2}[0,T])\big)$. For any $n \ge 1$, let $\mathscr{H}_n$ be the $n$th Wiener chaos of $G$, i.e.
the closed linear subspace of $L^2 (\Omega)$ generated by the random variables 
$\{ H_n \left( G(\varphi) \right),\ \varphi \in \mathcal{H}, \ \Vert \varphi \Vert_{\mathcal{H}} = 1\}$, and $H_n$ is the $n$th Hermite polynomial. It is well known that
the mapping $I_{n}^{G}(\varphi^{\otimes n}) = n! H_n \left( G(\varphi)\right)$ provides a linear isometry between the symmetric tensor product $\mathcal{H}^{\odot n}$
and subspace $\mathscr{H}_n$.
\begin{mydef}\cite{a-m-n}\label{regular}
We say that the kernel $K$ is \textit{regular} if for all $s \in [0,T), \ K(\cdot,s)$ has bounded variation on the interval $(s,T]$, and 
\begin{equation*}
 \int_{0}^{T} \vert K \vert \big( (s,T],s\big)^{2} \ud s < \infty.
\end{equation*}
\end{mydef}
For regular kernel $K$, put $K(s^{+},s):= K(T,s) -  K \big( (s,T],s\big)$. For any $\varphi \in \mathcal{E}$, define the seminorm
\begin{equation*}
 \Vert \varphi \Vert_{Kr}^{2}=\int^T_0 \varphi(s)^2 K(s^+,s)^2 \ud s + \int^T_0 \left( \int^T_s \vert \varphi(t)\vert \vert K \vert(\ud t,s) \right) ^2 \ud s.
\end{equation*}
Denote by $\mathcal{H}_{Kr}$ the completion of $\mathcal{E}$ with respect to seminorm $\Vert \ , \Vert_{Kr}$. The following proposition establishes the relationship between path-wise integral and Skorokhod integral.
\begin{propo}\cite{a-m-n}\label{path-sko}
 Assume $K$ is a regular kernel with $K(s^{+},s)=0$ and $u$ is a process in $\mathbb{D}^{1,2}_{G}(\mathcal{H}_{Kr})$. Then the process $u$ is Stratonovich integrable with respect to $G$ and 
 \begin{equation*}
  \int^T_0 u_t \ud G_t = \int^T_0 u_t \delta G_t + \int^T_0 \left( \int^T_s D_s u_t K(\ud t, s) \right)\ud s.
 \end{equation*}
\end{propo}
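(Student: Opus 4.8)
The plan is to reduce the statement to the Brownian motion $W$ underlying $G$ via the Volterra representation $G_t=\int_0^t K(t,s)\,\ud W_s$, and then to transport the classical Skorokhod--Stratonovich relation back to $G$ through the operator $K^*$. Under the standing hypothesis $K(s^+,s)=0$ the operator collapses to $(K^*\varphi)(s)=\int_s^T \varphi(t)\,K(\ud t,s)$; it is an isometry from $\mathcal H$ onto $L^2[0,T]$, it intertwines the divergences through $\delta_G(u)=\delta_W(K^* u)$, and, by the transfer principle (Proposition \ref{transfer} in the present Gaussian setting, $\mathbb D^{1,2}_G(\mathcal H)=(K^*)^{-1}\mathbb D^{1,2}_W(L^2[0,T])$), it intertwines the derivatives through $D^{(W)}F=K^*DF$. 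The engine of the argument is the well-known identity for Brownian motion: for a sufficiently regular $v$ one has $\int_0^T v_s\,\ud W_s=\delta_W(v)+\int_0^T D^{(W)}_s v_s\,\ud s$, the last term being the trace of the Malliavin derivative along the diagonal.

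First I would establish the identity on elementary integrands $u_t=F\,\mathbf 1_{(a,b]}(t)$ with $F$ a smooth functional, and then extend to step processes by linearity. For such $u$ the symmetric integral is computed directly from the regularization defining it, and since $F$ does not depend on time it factors out, giving $\int_0^T u_t\,\ud G_t=F\,(G_b-G_a)$. On the other hand the product rule $\delta_G(Fh)=F\,\delta_G(h)-\langle DF,h\rangle_{\mathcal H}$ with $h=\mathbf 1_{(a,b]}$ and $\delta_G(\mathbf 1_{(a,b]})=G_b-G_a$ yields $\delta_G(u)=F(G_b-G_a)-\langle DF,\mathbf 1_{(a,b]}\rangle_{\mathcal H}$. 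Subtracting, the discrepancy between the two integrals is exactly $\langle DF,\mathbf 1_{(a,b]}\rangle_{\mathcal H}$. Applying the $K^*$-isometry, $\langle DF,\mathbf 1_{(a,b]}\rangle_{\mathcal H}=\int_0^T (K^*DF)(s)\,(K^*\mathbf 1_{(a,b]})(s)\,\ud s$, and inserting $K^*DF=D^{(W)}F$ together with $(K^*\mathbf 1_{(a,b]})(s)=\int_s^T\mathbf 1_{(a,b]}(t)\,K(\ud t,s)$ recasts it as $\int_0^T\big(\int_s^T D^{(W)}_s u_t\,K(\ud t,s)\big)\,\ud s$, which is the asserted trace (the derivative paired with $K(\ud t,s)$ being the transported one $D^{(W)}=K^*D$).

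Next I would pass from step processes to a general $u\in\mathbb D^{1,2}_G(\mathcal H_{Kr})$ by a density argument. Taking the piecewise-averaged processes $u^n$ attached to a refining sequence of partitions, each symmetric Riemann sum $\sum_i \bar u^{\,n}_i\,(G_{t_{i+1}}-G_{t_i})$ splits, exactly as above, into $\delta_G(u^n)$ plus a sum of inner products $\sum_i\langle D\bar u^{\,n}_i,\mathbf 1_{(t_i,t_{i+1}]}\rangle_{\mathcal H}$. The divergence term converges to $\delta_G(u)$ by continuity of $\delta_G$ on $\mathbb D^{1,2}$, once one checks $u^n\to u$ in $\mathbb D^{1,2}_G(\mathcal H)$; the sum of inner products converges, after the same $K^*$ computation, to $\int_0^T\big(\int_s^T D^{(W)}_s u_t\,K(\ud t,s)\big)\,\ud s$. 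Crucially, this last term is the one governed by the seminorm $\Vert\cdot\Vert_{Kr}$, which under $K(s^+,s)=0$ reduces to $\int_0^T\big(\int_s^T|\varphi(t)|\,|K|(\ud t,s)\big)^2\,\ud s$ and therefore dominates the trace; the membership $u\in\mathbb D^{1,2}_G(\mathcal H_{Kr})$ is precisely what keeps it finite and the approximation convergent. Since the left-hand Riemann sums then converge in probability, $u$ is Stratonovich integrable and its integral equals the stated sum.

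The main obstacle is this passage to the limit. One must prove that the symmetric Riemann sums actually converge, that their limit is independent of the approximating partitions, and that the trace functional is continuous for the $\mathcal H_{Kr}$-topology; all three rely on the regularity of $K$ and on $u\in\mathbb D^{1,2}_G(\mathcal H_{Kr})$, which control the Malliavin norms of the increments uniformly in the mesh. The diagonal condition $K(s^+,s)=0$ is essential here: it annihilates the first summand $\int_0^T\varphi(s)^2K(s^+,s)^2\,\ud s$ of $\Vert\cdot\Vert_{Kr}$ and, probabilistically, removes the additional It\^o-type boundary contribution that would otherwise appear, so that the pathwise and Skorokhod integrals differ precisely by the trace and by nothing else.
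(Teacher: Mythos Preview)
The paper does not prove this proposition: it is quoted verbatim from \cite{a-m-n} (Al\`os--Mazet--Nualart) and used as a black box, so there is no ``paper's own proof'' to compare your proposal against.

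For what it is worth, your outline is in the spirit of the original argument in \cite{a-m-n}: one first checks the identity on elementary processes $u=F\mathbf 1_{(a,b]}$ via the product rule $\delta_G(Fh)=F\,G(h)-\langle DF,h\rangle_{\mathcal H}$, then closes by density in $\mathbb D^{1,2}_G(\mathcal H_{Kr})$, the seminorm $\Vert\cdot\Vert_{Kr}$ being tailored exactly so that the trace term $\int_0^T\big(\int_s^T D_s u_t\,K(\ud t,s)\big)\ud s$ is continuous. Your sketch is correct at that level, though the passage to the limit (existence of the symmetric integral and independence of the partition) is where the real work lies and is only gestured at; in \cite{a-m-n} this is handled by explicit $L^2$ estimates on the Riemann sums using the regularity of $K$.
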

The next proposition is taken from \cite{n-o} and provides a central limit theorem for a sequence of multiple Wiener integrals. Let $\mathcal{N}(0,\sigma^2)$ denote the Gaussian distribution with zero mean and variance $\sigma^2$.

\begin{propo}\label{CLT} Let $\{F_n\}_{n \ge 1}$ be a sequence of random variables in the $q$-th Wiener chaos, $q \ge2$, such that $\lim_{n \to \infty} \E(F_n ^2) = \sigma^2$. Then the following statements are equivalent:
 \begin{description}
 \item[(i)] $F_n$ converges in distribution to $\mathcal{N}(0,\sigma^2)$ as $n$ tends to infinity.
 
 \item[(ii)] $ \Vert DF_n \Vert^{2}_{\mathcal{H}}$ converges in $L^{2}(\Omega)$ to $q \sigma^{2}$ as $n$ tends to infinity.
 \end{description}
\end{propo}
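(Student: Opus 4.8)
The plan is to prove the two implications separately, exploiting the interplay between the derivative operator $D$, the divergence operator $\delta$, and the Wiener chaos structure. Write each $F_n = I_q^G(f_n)$ for a symmetric kernel $f_n \in \mathcal{H}^{\odot q}$, and recall the two facts that drive the argument: first, $D_s F_n = q\, I_{q-1}^G(f_n(\cdot,s))$, so that differentiation lowers the chaos order by one; second, $\delta(DF_n) = q F_n$, which is the chaos-$q$ eigenrelation for the Ornstein--Uhlenbeck generator $L = -\delta D$ (so $F_n = \tfrac1q \delta(DF_n)$). Throughout I will also use that on a fixed chaos all $L^p(\Omega)$ norms are equivalent (hypercontractivity), so that $\{F_n\}$ and $\{\Vert DF_n\Vert_{\mathcal H}^2\}$ are bounded in every $L^p$, together with the elementary identities $\E(F_n^2) = q!\,\Vert f_n\Vert_{\mathcal H^{\otimes q}}^2$ and $\E(\Vert DF_n\Vert_{\mathcal H}^2) = q\,\E(F_n^2)$.

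For the implication (ii)$\Rightarrow$(i) I would argue through characteristic functions. Set $\phi_n(t) = \E(e^{itF_n})$. Using $F_n = \tfrac1q\delta(DF_n)$ together with the duality relationship $\E(G\,\delta(u)) = \E\langle DG,u\rangle_{\mathcal H}$ applied to $G = e^{itF_n}$ and $u=DF_n$, and the chain rule $D(e^{itF_n}) = it\,e^{itF_n}\,DF_n$, one obtains
\begin{equation*}
\phi_n'(t) = i\,\E(F_n e^{itF_n}) = -\frac{t}{q}\,\E\big(e^{itF_n}\,\Vert DF_n\Vert_{\mathcal H}^2\big).
\end{equation*}
Splitting $\Vert DF_n\Vert_{\mathcal H}^2 = q\sigma^2 + (\Vert DF_n\Vert_{\mathcal H}^2 - q\sigma^2)$ gives $\phi_n'(t) = -t\sigma^2\phi_n(t) + R_n(t)$, where the remainder satisfies $|R_n(t)| \le \tfrac{|t|}{q}\,\E\big|\Vert DF_n\Vert_{\mathcal H}^2 - q\sigma^2\big| \le \tfrac{|t|}{q}\,\big\Vert \Vert DF_n\Vert_{\mathcal H}^2 - q\sigma^2\big\Vert_{L^2(\Omega)}$. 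By hypothesis (ii) this bound tends to $0$, uniformly on compact $t$-intervals. Comparing $\phi_n$ with the solution $e^{-\sigma^2 t^2/2}$ of the limiting linear ODE $\psi'(t) = -t\sigma^2\psi(t)$, $\psi(0)=1$, a Gronwall-type estimate yields $\phi_n(t)\to e^{-\sigma^2t^2/2}$ for every $t$, so that $F_n$ converges in law to $\mathcal N(0,\sigma^2)$.

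For the converse (i)$\Rightarrow$(ii) I would pass through the contraction kernels. Since $\E(F_n^2) = q!\,\Vert f_n\Vert^2 \to \sigma^2$, convergence in law together with hypercontractivity (hence uniform integrability of all moments) upgrades to convergence of moments, in particular $\E(F_n^4)\to 3\sigma^4$, so the fourth cumulant $\kappa_4(F_n) = \E(F_n^4) - 3\E(F_n^2)^2$ tends to $0$. The multiplication formula $I_p^G(g)I_{p'}^G(h)=\sum_{r} r!\binom{p}{r}\binom{p'}{r} I_{p+p'-2r}^G(g\,\tilde\otimes_r h)$ expresses $\kappa_4(F_n)$ as a nonnegative linear combination of the squared (symmetrized) contraction norms $\Vert f_n\otimes_r f_n\Vert_{\mathcal H^{\otimes 2(q-r)}}^2$, $r=1,\dots,q-1$; hence each of these contractions vanishes in the limit. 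Applying the same formula to $\tfrac1q\Vert DF_n\Vert_{\mathcal H}^2 = \tfrac1q\langle DF_n,DF_n\rangle_{\mathcal H}$ produces a chaos expansion whose zeroth-order term equals $\E(F_n^2)\to\sigma^2$ and whose higher-order terms have $L^2(\Omega)$-norms controlled by the very same contraction norms. Therefore $\mathrm{Var}\big(\tfrac1q\Vert DF_n\Vert_{\mathcal H}^2\big)\to 0$, and combined with $\E(\Vert DF_n\Vert_{\mathcal H}^2)\to q\sigma^2$ this forces $\Vert DF_n\Vert_{\mathcal H}^2\to q\sigma^2$ in $L^2(\Omega)$, which is (ii).

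The routine parts are the differential-equation estimate and the uniform-integrability upgrade of the moments. The main obstacle is the combinatorial heart of the converse: establishing, via the product formula and careful symmetrization, the two algebraic identities that realize both $\kappa_4(F_n)$ and $\mathrm{Var}(\Vert DF_n\Vert_{\mathcal H}^2)$ as comparable positive combinations of the contraction norms $\Vert f_n\otimes_r f_n\Vert$. Once these identities are in place the equivalence is immediate, but tracking the contraction indices and the precise combinatorial constants is the delicate step.
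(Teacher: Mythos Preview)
The paper does not give its own proof of this proposition: it is stated with the citation \cite{n-o} and used as a black box. So there is no argument in the paper to compare against; the relevant benchmark is the original proof in Nualart and Ortiz-Latorre (2008), together with the closely related fourth-moment theorem of Nualart and Peccati.

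Your proposal reproduces precisely that standard argument. The direction (ii)$\Rightarrow$(i) via the characteristic-function ODE, using $F_n=\tfrac1q\delta(DF_n)$ and the duality relation, is exactly the Nualart--Ortiz-Latorre proof. For (i)$\Rightarrow$(ii) you correctly route through the fourth cumulant and contraction norms, which is the Nualart--Peccati mechanism; the chain ``convergence in law $\Rightarrow$ convergence of moments by hypercontractivity $\Rightarrow$ $\kappa_4(F_n)\to 0$ $\Rightarrow$ all contractions $\Vert f_n\otimes_r f_n\Vert\to 0$ $\Rightarrow$ $\mathrm{Var}(\Vert DF_n\Vert_{\mathcal H}^2)\to 0$'' is the accepted one, and you are right that the only nontrivial step is the pair of product-formula identities expressing $\kappa_4(F_n)$ and $\mathrm{Var}(\Vert DF_n\Vert_{\mathcal H}^2)$ as positive combinations of the same contraction norms. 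One small caution: the fourth-cumulant identity naturally produces \emph{symmetrized} contractions $\Vert f_n\,\tilde\otimes_r f_n\Vert$, and one must invoke the inequality $\Vert f_n\,\tilde\otimes_r f_n\Vert \le \Vert f_n\otimes_r f_n\Vert$ together with the fact that the unsymmetrized norms also appear (with positive coefficients) in the full expansion of $\E(F_n^4)$, or alternatively bound $\mathrm{Var}(\Vert DF_n\Vert_{\mathcal H}^2)$ directly by the symmetrized contractions. This is routine but worth making explicit if you write the proof out in full.
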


The next proposition will be used to bridge our original Langevin equation $(\ref{Langevin})$ to a new SDE that plays the key role in our computations.

\begin{propo}\label{timechange} \cite{b-n}
 Let $B= \{ B_t \}_{t \in [0,T]}$ be a fractional Brownian motion with Hurst parameter $H \in (\frac{1}{2},1)$. Suppose $f:[0,T] \to \R$ be a strictly positive, 
absolutely continuous and $f^{'}$ is locally square integrable function. Then there exists a locally square integrable kernel $L$ such that
\begin{equation*}
B_{\int_{0}^{t} f(s)^{\frac{1}{H}} \ud s} \stackrel{\text{law}}{=} \int_{0}^{t} f(s) \ud B_s + \int_{0}^{t} \Big( \int_{r}^{t} f(u) \Big( \int_{r}^{u} \frac{\partial K_H}{\partial s} (u,s) L(s,r) \ud s \Big) \ud u \Big) \ud W_r
\end{equation*}
where Brownian motion $W$ is given by \ref{BW}. Moreover the kernel $L$ satisfies in
\begin{equation*}
 \phi(t,s)= \int_{s}^{t} f(t) \left( \frac{t}{r} \right)^{H - \frac{1}{2}} \left( t-r \right)^{H - \frac{3}{2}} L(r,s) \ud r
\end{equation*}
and
\begin{equation*}
\begin{split}
\phi(t,s)&=\frac{\partial K_{H}}{\partial t} \Big( \int_{0}^{t} f(v)^{\frac{1}{H}} \ud v, \int_{0}^{s} f(v)^{\frac{1}{H}} \ud v \Big) f(t)^{\frac{1}{H}} f(s)^{\frac{1}{2H}}\\
& \hspace{6cm } - f(t) \frac{\partial K_{H}}{\partial t} (t,s).
\end{split}
\end{equation*}
\end{propo}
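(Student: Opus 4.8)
The plan is to observe that both sides of the asserted identity are centered Gaussian processes indexed by $t\in[0,T]$, so that equality in law reduces to equality of their covariance functions. Write $\tau(t):=\int_0^t f(v)^{1/H}\,\ud v$ for the time change; under the hypotheses on $f$ it is strictly increasing and absolutely continuous with $\tau'(t)=f(t)^{1/H}$. Using the Volterra representation $(\ref{representation FBW})$, the left-hand side is $B_{\tau(t)}=\int_0^{\tau(t)}K_H(\tau(t),u)\,\ud W_u$, whence its covariance is $R_H(\tau(t),\tau(s))=\int_0^{\tau(t)\wedge\tau(s)}K_H(\tau(t),u)K_H(\tau(s),u)\,\ud u$. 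Since $\tau$ is increasing, $\tau(t)\wedge\tau(s)=\tau(t\wedge s)$, and the substitution $u=\tau(r)$, $\ud u=f(r)^{1/H}\,\ud r$, gives
\begin{equation*}
\mathrm{Cov}\big(B_{\tau(t)},B_{\tau(s)}\big)=\int_0^{t\wedge s}K_H(\tau(t),\tau(r))\,K_H(\tau(s),\tau(r))\,f(r)^{1/H}\,\ud r .
\end{equation*}

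Next I would write the right-hand side as a single Wiener integral against the same $W$. Since $H>\tfrac12$ and $f$ is deterministic, the pathwise Riemann--Stieltjes integral coincides with the Wiener integral $B(f\mathbf 1_{[0,t]})$, and the isometry underlying $(\ref{BW})$--$(\ref{representation FBW})$ yields $\int_0^t f(s)\,\ud B_s=\int_0^t (K_H^{\ast}(f\mathbf 1_{[0,t]}))(r)\,\ud W_r$ with $(K_H^{\ast}(f\mathbf 1_{[0,t]}))(r)=\int_r^t f(u)\,\frac{\partial K_H}{\partial u}(u,r)\,\ud u$. Adding the second iterated integral, which is already a Wiener integral against $W$, the whole right-hand side takes the form $\int_0^t\Phi(t,r)\,\ud W_r$ with
\begin{equation*}
\Phi(t,r)=\int_r^t f(u)\,\frac{\partial K_H}{\partial u}(u,r)\,\ud u+\int_r^t f(u)\Big(\int_r^u\frac{\partial K_H}{\partial s}(u,s)\,L(s,r)\,\ud s\Big)\ud u .
\end{equation*}
Both summands lie in the first Wiener chaos of $W$, so the right-hand side is centered Gaussian with covariance $\int_0^{t\wedge s}\Phi(t,r)\Phi(s,r)\,\ud r$. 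Comparing with the previous display, it suffices to prove the pointwise identity $\Phi(t,r)=K_H(\tau(t),\tau(r))\,f(r)^{1/(2H)}$ for $0\le r\le t\le T$.

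I would establish this identity by a boundary-plus-derivative argument in the variable $t$. At $t=r$ both sides vanish, because $K_H(\sigma,\sigma)=0$ and $\Phi(r,r)=0$, so it is enough to match $\partial_t$. Differentiating $\Phi$ gives $\partial_t\Phi(t,r)=f(t)\frac{\partial K_H}{\partial t}(t,r)+f(t)\int_r^t\frac{\partial K_H}{\partial s}(t,s)\,L(s,r)\,\ud s$, while $\partial_t\big(K_H(\tau(t),\tau(r))f(r)^{1/(2H)}\big)=\frac{\partial K_H}{\partial t}(\tau(t),\tau(r))\,f(t)^{1/H}f(r)^{1/(2H)}$ by $\tau'(t)=f(t)^{1/H}$. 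Subtracting, the desired identity is equivalent to
\begin{equation*}
f(t)\int_r^t\frac{\partial K_H}{\partial s}(t,s)\,L(s,r)\,\ud s=\frac{\partial K_H}{\partial t}(\tau(t),\tau(r))\,f(t)^{1/H}f(r)^{1/(2H)}-f(t)\frac{\partial K_H}{\partial t}(t,r)=\phi(t,r),
\end{equation*}
which is precisely the pair of defining relations for $L$ and $\phi$ stated in the proposition. Hence everything reduces to solving this relation for $L$.

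The main obstacle is the construction of $L$ itself. Inserting the explicit derivative $\frac{\partial K_H}{\partial t}(t,s)=c_H\big(\tfrac{t}{s}\big)^{H-1/2}(t-s)^{H-3/2}$ turns the relation for $L$ into a weakly singular Volterra equation of the first kind of generalized Abel type, with kernel $(t-r)^{H-3/2}$; since $H-\tfrac32\in(-1,-\tfrac12)$ this kernel is singular on the diagonal though still integrable. Producing a solution $L$ requires inverting this Abel operator by Riemann--Liouville fractional calculus, and the genuinely delicate point is to show that the resulting $L$ is locally square integrable. This in turn rests on the regularity of $\phi$ near the diagonal $t=r$, which must be controlled from the absolute continuity of $f$ and the local square integrability of $f'$. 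Once such an $L$ is obtained, its local square integrability is exactly what guarantees $\Phi(\cdot,r)\in L^2$ and legitimizes the Wiener-integral manipulations above, so that the covariance computations of the first two steps close the argument.
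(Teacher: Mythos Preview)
The paper does not supply a proof of this proposition: it is stated as a quotation from Baudoin--Nualart \cite{b-n} and is used as a black box in the proof of Lemma~\ref{alternative_X_t}. There is therefore no ``paper's own proof'' to compare against.

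That said, your outline is the natural one and is essentially how the result is established in \cite{b-n}. The reduction to equality of covariances is correct since both sides are centered Gaussian; your rewriting of the right-hand side as $\int_0^t\Phi(t,r)\,\ud W_r$ with the stated $\Phi$ is correct (for deterministic $f$ the pathwise and Wiener integrals coincide and $K_H^\ast(f\mathbf 1_{[0,t]})(r)=\int_r^t f(u)\,\partial_1 K_H(u,r)\,\ud u$); and your boundary-plus-derivative argument cleanly shows that the kernel identity $\Phi(t,r)=K_H(\tau(t),\tau(r))f(r)^{1/(2H)}$ is \emph{equivalent} to the integral equation defining $L$ through $\phi$. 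One small notational point: in the proposition $\frac{\partial K_H}{\partial s}(u,s)$ denotes the partial derivative in the first argument evaluated at $(u,s)$, consistent with the explicit form $c_H(u/s)^{H-1/2}(u-s)^{H-3/2}$ appearing in the companion relation for $\phi$; you have read it this way, which is correct, but it is worth making explicit since the notation is ambiguous. Up to the constant $c_H$ (which the paper's display for $\phi$ appears to absorb), your derivative identity matches the stated relation exactly.

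You are also right that the substantive content lies in the last step: the existence of a locally square integrable $L$ solving the Abel-type Volterra equation. Your sketch via Riemann--Liouville fractional inversion is the right mechanism; the point requiring genuine care is that $\phi(t,r)$, being a difference of two $\partial_1 K_H$ terms, enjoys better behaviour near the diagonal $t=r$ than either term alone (a cancellation driven by $\tau(t)-\tau(r)\sim f(r)^{1/H}(t-r)$), and it is this cancellation together with the absolute continuity of $f$ that yields $L\in L^2_{\mathrm{loc}}$. In \cite{b-n} this is handled by explicit fractional-calculus estimates; your proposal identifies the issue correctly but stops short of carrying it out, which is the only gap between your outline and a complete proof.
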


\section{Main Results}\label{main}
For the rest of the paper, we assume that $H >\frac{1}{2}$. Let $X= \{ X_t \} _{t \in [0,T]}$ be the solution of the Langevin equation $(\ref{Langevin})$. Assume $X_0 = 0$ and $\theta >1$ $($this is a technical assumption which we need in the proof 
of the main result$)$. Then the solution can be expressed as
\begin{equation}\label{X}
 X_t = \int_0 ^ t e^{- \theta (t-s)} \ud Y^{(1)}_s.
\end{equation}

 




The next theorem gives the main result of the paper.
\begin{thm} \label{main_theorem}
 The least squares estimator $ \widehat\theta_T $ given by $(\ref{leastsquare})$ is weakly consistent, i.e. 
\begin{equation*}
\widehat\theta_T \rightarrow \theta
\end{equation*}
in probability, as T tends to infinity.
\end{thm}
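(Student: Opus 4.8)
The plan is to turn $(\ref{leastsquare})$ into an exact error identity and then control numerator and denominator on the scale $T$. Writing the Langevin equation $(\ref{Langevin})$ in Skorokhod form as $\delta X_t = -\theta X_t\,\ud t + \delta Y_t^{(1)}$ and using linearity of the divergence, I would substitute
\begin{equation*}
\int_0^T X_t\,\delta X_t = -\theta \int_0^T X_t^2\,\ud t + \int_0^T X_t\,\delta Y_t^{(1)}
\end{equation*}
into $(\ref{leastsquare})$, obtaining the exact identity
\begin{equation*}
\widehat\theta_T - \theta = -\,\frac{\int_0^T X_t\,\delta Y_t^{(1)}}{\int_0^T X_t^2\,\ud t}.
\end{equation*}
It then suffices to show that the denominator grows linearly in $T$ while the numerator is of strictly smaller order.

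For the denominator I would pass to the stationary solution $U$ of $(\ref{U})$. Since $X_0=0$, comparing $(\ref{X})$ with $(\ref{U})$ gives $X_t = U_t - e^{-\theta t}\,\xi_0$ with $\xi_0 = \int_{-\infty}^0 e^{(\theta-1)s}\,\ud B_{a_s}$ a fixed square-integrable Gaussian variable, so the correction decays exponentially and cannot affect Ces\`aro limits. By the Proposition of \cite{k-s} the covariance of $U$ decays exponentially, hence $U$ has short-range dependence and is ergodic; the continuous-time ergodic theorem then yields $\frac{1}{T}\int_0^T U_t^2\,\ud t \to \E[U_0^2] =: \sigma^2 > 0$ almost surely, and consequently $\frac{1}{T}\int_0^T X_t^2\,\ud t \to \sigma^2$ in probability.

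For the numerator the goal is $\frac{1}{T}\int_0^T X_t\,\delta Y_t^{(1)} \to 0$, for which I would prove the moment bound $\E\big[\big(\int_0^T X_t\,\delta Y_t^{(1)}\big)^2\big] = O(T)$; dividing by a denominator of size $\sim \sigma^2 T$ then produces a term of order $T^{-1/2}$. Here I would invoke Lemma \ref{alternative_X_t}: applying the time-change of Proposition \ref{timechange} with $f$ matched to $a_t = He^{t/H}$, I replace $Y^{(1)}$ by an equivalent-in-law noise whose integrator is an ordinary fractional Brownian motion plus a Brownian correction, so that $X_t$ and the integrand become explicit. The second moment of the Skorokhod integral is then expanded through the isometry $\E[\delta(u)^2] = \E\Vert u\Vert_{\mathcal{H}}^2 + \E\langle Du, (Du)^{\ast}\rangle_{\mathcal{H}\otimes\mathcal{H}}$, using the transfer principle (Proposition \ref{transfer}), and each term reduces to a double integral over $[0,T]^2$ against the kernel $\alpha_H|t-s|^{2H-2}$ weighted by the covariances of $X$.

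The main obstacle is precisely this last estimate: showing that the resulting double integrals are $O(T)$ rather than a higher power of $T$. This is where the exponential decay of the covariance of the second-kind process is decisive, in sharp contrast with the long-range first-kind case, and where the explicit kernels supplied by Lemma \ref{alternative_X_t} are needed to carry out the Malliavin-calculus bookkeeping (in particular to dominate the derivative/trace term). Once the $O(T)$ bound is secured, combining it with the denominator asymptotics gives $\widehat\theta_T - \theta = O_P(T^{-1/2}) \to 0$ in probability, which is the asserted weak consistency and simultaneously sets up the $\sqrt{T}$-normalization used later for asymptotic normality.
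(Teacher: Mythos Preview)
Your plan is correct and would yield weak consistency, but it follows a different route from the paper's own proof. You attack the numerator $\int_0^T X_t\,\delta Y_t^{(1)}$ head-on via the second moment, aiming for $\E\big[(\int_0^T X_t\,\delta Y_t^{(1)})^2\big]=O(T)$; this is exactly the variance computation carried out later in the proof of Theorem~\ref{second_main_theorem} (where the numerator is written as $I_2^{\tilde G}(\tfrac12 e^{-\theta|t-s|})$ and $\E(F_T^2)$ is shown to converge), so your approach effectively front-loads the CLT estimate and obtains consistency as a corollary. The paper instead avoids moment bounds entirely for Theorem~\ref{main_theorem}: after passing to the equivalent model of Lemma~\ref{alternative_X_t}, it invokes the pathwise--Skorokhod relation (Proposition~\ref{path-sko}) to write $\int_0^T \tilde X_t\,\delta\tilde G_t = \int_0^T \tilde X_t\,\ud\tilde G_t - (\text{trace term})$, then uses the SDE and the chain rule to turn the pathwise integral into $\tfrac12\tilde X_T^2 + \theta\int_0^T\tilde X_t^2\,\ud t$. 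This gives the decomposition $\widehat\theta_T \stackrel{\text{law}}{=} -\tfrac{\tilde X_T^2/2}{\int_0^T\tilde X_t^2\,\ud t} + \tfrac{\text{trace}}{\int_0^T\tilde X_t^2\,\ud t}$, where the first ratio tends to $0$ almost surely (via a Pickands-type maximum estimate, Lemma~\ref{Ua.s.}) and the second ratio has a \emph{deterministic} numerator that is computed explicitly and shown to converge to $\theta$ times the denominator's limit. What the paper's approach buys is an almost-sure, self-contained proof of consistency requiring no variance analysis; what yours buys is economy if the CLT is the ultimate goal, since the $O(T)$ bound you need is precisely $\lim_T \E(F_T^2)<\infty$. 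Note also that in your isometry expansion the ``derivative/trace term'' is not an obstacle to dominate but a deterministic double integral (since $X_t$ lies in the first chaos), and it is simpler to view the numerator directly as a second-chaos element $I_2(f)$ with $f(s,t)=\tfrac12 e^{-\theta|t-s|}$ rather than via the general Skorokhod isometry.
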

The next theorem provides the rate of convergence for the least squares estimator towards the Gaussian distribution. Let $B(x,y)$ denote the complete Beta function and $\stackrel{law}{\to}$ stands for convergence in distribution.

\begin{thm}\label{second_main_theorem}
For the least squares estimator $ \widehat\theta_T $ given by $(\ref{leastsquare})$, we have
 \begin{equation*}
 \sqrt{T}\left(  \widehat\theta_T - \theta \right) \stackrel{law}{\to} \mathcal{N}(0,\sigma^{2}(\theta,H))
 \end{equation*}
as $T$ tends to infinity, where
\begin{equation*}
 \begin{split}
 \sigma^{2}(\theta,H)&= \frac{\theta^2 }{H^{2}B((\theta - 1)H +1,2H-1)^2}\\
 &\times 2 \int_{[0,\infty)^3}e^{-\theta y}e^{-\theta \vert z-x \vert} e^{(1- \frac{1}{H})(x+y+z)}
 \Big[ \big\vert e^{- \frac{y}{H}} - e^{- \frac{x}{H}} \big\vert \big\vert 1 - e^{- \frac{z}{H}} \big\vert \Big]^{2H-2} \ud x \ud y \ud z.
\end{split}
 \end{equation*}
\end{thm}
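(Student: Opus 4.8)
The plan is to reduce the estimation error to a ratio whose numerator lies in the second Wiener chaos and whose denominator converges by ergodicity, and then to handle the two pieces separately. First I would insert the Langevin dynamics (\ref{Langevin}) with $X_0=0$ into the definition (\ref{leastsquare}): interpreting $\delta X_t = -\theta X_t\,\ud t + \delta Y_t^{(1)}$ in the Skorokhod sense gives $\int_0^T X_t\,\delta X_t = -\theta\int_0^T X_t^2\,\ud t + \int_0^T X_t\,\delta Y_t^{(1)}$, so that
\begin{equation*}
\sqrt{T}\left(\widehat\theta_T-\theta\right) = -\,\frac{T^{-1/2}\int_0^T X_t\,\delta Y_t^{(1)}}{T^{-1}\int_0^T X_t^2\,\ud t}.
\end{equation*}
By Slutsky's theorem it then suffices to prove that the denominator converges in probability to a positive constant, that the numerator converges in law to a centered Gaussian, and to identify both limits.

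For the denominator I would reuse the ergodic limit already established in the proof of Theorem \ref{main_theorem}, namely that $T^{-1}\int_0^T X_t^2\,\ud t$ converges in probability to $c(\theta,H):=\E[U_0^2]$, with $U$ the stationary process (\ref{U}). Writing $U_0=\int_{-\infty}^0 e^{(\theta-1)s}\,\ud B_{a_s}$, linearizing the time change $a_s=He^{s/H}$ via the substitution $u=e^{s/H}$, and applying the isometry (\ref{isometryfbm}), this stationary variance evaluates to the Beta-function expression $c(\theta,H)=\frac{(2H-1)H^{2H}}{\theta}B((\theta-1)H+1,2H-1)$, which is precisely the quantity producing the prefactor $\frac{\theta^2}{H^2 B((\theta-1)H+1,2H-1)^2}$ in $\sigma^2(\theta,H)$ once divided into the numerator variance.

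The core of the argument is the numerator $F_T:=T^{-1/2}\int_0^T X_t\,\delta Y_t^{(1)}$. Since $X_t$ is a first-chaos functional of the driving noise, the Skorokhod integral against $Y^{(1)}$ raises the chaos order by one, placing $F_T$ in the second Wiener chaos; I would therefore write $F_T=I_2(f_T)$ for an explicit symmetric kernel $f_T\in\mathcal{H}^{\odot 2}$. To obtain a tractable kernel I would pass to the equivalent-in-law model furnished by Lemma \ref{alternative_X_t} together with the time-change Proposition \ref{timechange} (with $f(s)=e^s$, so that $\int_0^t f(v)^{1/H}\,\ud v = He^{t/H}-H = a_t-a_0$), which converts the correlation structure of $\ud B_{a_s}$ into the kernel $|t-s|^{2H-2}$ of (\ref{isometryfbm}). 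Computing $\E[F_T^2]=2\Vert f_T\Vert_{\mathcal{H}^{\otimes 2}}^2$ then reduces, after the $T^{-1}$ averaging absorbs one time variable through stationarity, to
\begin{equation*}
\sigma_{\mathrm{num}}^2 = (2H-1)^2 H^{4H-2}\cdot 2\int_{[0,\infty)^3} e^{-\theta y}\,e^{-\theta|z-x|}\,e^{(1-\frac{1}{H})(x+y+z)}\Big[\big|e^{-\frac{y}{H}}-e^{-\frac{x}{H}}\big|\,\big|1-e^{-\frac{z}{H}}\big|\Big]^{2H-2}\,\ud x\,\ud y\,\ud z;
\end{equation*}
here the exponential factors come from the Green's function $e^{-\theta(t-s)}$ of (\ref{Langevin}) and the weight $e^{(\theta-1)s}$, while the bracketed factor arises from two applications of the correlation kernel in (\ref{isometryfbm}). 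Dividing $\sigma_{\mathrm{num}}^2$ by $c(\theta,H)^2$ reproduces $\sigma^2(\theta,H)$ exactly.

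Finally, to upgrade the $L^2$ control of $F_T$ to convergence in distribution I would invoke the central limit theorem for multiple Wiener integrals, Proposition \ref{CLT}, with $q=2$: having shown $\E[F_T^2]\to\sigma_{\mathrm{num}}^2$, it remains to verify condition (ii), that $\Vert DF_T\Vert_{\mathcal{H}}^2$ converges in $L^2(\Omega)$ to $2\sigma_{\mathrm{num}}^2$. \emph{This verification is the main obstacle.} It amounts to showing that the contraction $f_T\otimes_1 f_T$ vanishes in the limit, which requires delicate estimates of triple and quadruple integrals of products of the kernels $e^{-\theta|\cdot|}$ and $|\cdot|^{2H-2}$, and, crucially, controlling the transient terms generated by the non-stationary initial condition $X_0=0$ and by the correction kernel $L$ of Proposition \ref{timechange}, showing they are of order smaller than $\sqrt T$ and hence negligible after scaling. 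Once (ii) holds, Proposition \ref{CLT} yields $F_T\stackrel{law}{\to}\mathcal{N}(0,\sigma_{\mathrm{num}}^2)$, and combining this with the denominator limit through Slutsky's theorem gives $\sqrt{T}(\widehat\theta_T-\theta)\stackrel{law}{\to}\mathcal{N}(0,\sigma^2(\theta,H))$.
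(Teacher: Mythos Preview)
Your proposal is correct and follows essentially the same route as the paper: the ratio decomposition via Slutsky, Lemma \ref{denominator} for the denominator, identification of the numerator as a second-chaos integral in the equivalent $\tilde G$-model of Lemma \ref{alternative_X_t}, the limiting-variance computation (the paper differentiates $I_T$ in $T$ and applies L'H\^opital where you invoke stationarity, but the outcome is identical and your constants check), and Proposition \ref{CLT} for the CLT. Your flagged obstacle about separate ``transient terms from the correction kernel $L$'' does not actually arise in the paper's execution, since one works directly with the covariance of $\tilde G\stackrel{\text{law}}{=}Y^{(1)}$ (via Lemma \ref{I_{2}^{up}}) and $L$ is absorbed into that covariance rather than appearing as an extra term; the only genuinely technical step, the $L^2$ convergence of $\Vert DF_T\Vert_{\tilde{\mathcal H}}^2$, is handled in the paper by the same multi-integral estimates you sketch (with a reference to \cite{a-v}).
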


\begin{rmk}
It is worth to mention that the central limit theorem \ref{second_main_theorem} holds for whole range $H \in (\frac{1}{2},1)$, whereas for the fractional Ornstein-Uhlenbeck process of the first kind we have restriction 
$H \in [\frac{1}{2},\frac{3}{4})$, see \cite{h-n}. 
\end{rmk}

To prove Theorem \ref{main_theorem}, we need a series of lemmas. We start with the following lemma that provides an alternative stochastic differential equation of the Langevin equation (\ref{Langevin}).

\begin{lemma}\label{alternative_X_t}
Let $\tilde B_t= B_{t+H} - B_{H}$ be the shifted fractional Brownian motion. Then there exits a regular Volterra-type kernel $\tilde{L}$, in sense of Definition \ref{regular},  
so that for the solution of the following stochastic differential equation

\begin{equation}\label{diffeqXtilde}
\mathrm{d}\tilde{X}_t = -\theta \tilde{X}_t\,\mathrm{d}t+
\mathrm{d}\tilde{G}_t,\quad \tilde{X}_0 = 0,
\end{equation}

we have, $\{X_t \}_{t\in [0,T]} \stackrel{law}{=}\{\tilde{X}_t \}_{t\in [0,T]}$  where the Gaussian process

\begin{equation*}\label{Gtilde}
\tilde{G}_t=\int_0^t  \left(K_H(t,s)+\tilde{L}(t,s)\right)\,\mathrm{d}\tilde{W}_s
\end{equation*}

and Brownian motion $\tilde{W}$ is defined as \ref{BW}.
\end{lemma}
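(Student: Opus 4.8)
The plan is to reduce the identity in law $\{X_t\}\stackrel{\text{law}}{=}\{\tilde X_t\}$ to a statement about the driving noises alone. Since $X_0=\tilde X_0=0$, both linear equations are solved pathwise by one and the same map, $X_t=e^{-\theta t}\int_0^t e^{\theta s}\,\mathrm{d}Y^{(1)}_s$ and $\tilde X_t=e^{-\theta t}\int_0^t e^{\theta s}\,\mathrm{d}\tilde G_s$; after one integration by parts these are continuous functionals of the driving path. Hence it suffices to exhibit a kernel $\tilde L$ for which $\{Y^{(1)}_t\}\stackrel{\text{law}}{=}\{\tilde G_t\}$, because equality in law of the noises is then transported through the continuous solution map, giving $X\stackrel{\text{law}}{=}\tilde X$.

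First I would bring $Y^{(1)}$ into the shape required by Proposition \ref{timechange}. Since $a_s-H=\int_0^s e^{u/H}\,\mathrm{d}u$ and the shifted process $\tilde B_t=B_{t+H}-B_H$ is again a fractional Brownian motion of index $H$ (with associated Brownian motion $\tilde W$ defined through \ref{BW}), the constant $B_H$ disappears under the differential and $Y^{(1)}_t=\int_0^t e^{-s}\,\mathrm{d}\tilde B_{c(s)}$ with $c(s)=\int_0^s e^{u/H}\,\mathrm{d}u$. This is precisely the time change of Proposition \ref{timechange} for $f(s)=e^s$ --- strictly positive, absolutely continuous, with $f'$ locally square integrable, and $H\in(\tfrac12,1)$ as required --- so there is a locally square integrable kernel $L$ with
\[
\tilde B_{c(t)}\stackrel{\text{law}}{=}\int_0^t e^s\,\mathrm{d}\tilde B_s+\int_0^t g(t,r)\,\mathrm{d}\tilde W_r,
\]
where $g(t,r)=\int_r^t e^u\big(\int_r^u \tfrac{\partial K_H}{\partial s}(u,s)L(s,r)\,\mathrm{d}s\big)\mathrm{d}u$ is the iterated kernel furnished by the proposition.

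Next I would apply the continuous functional $Z\mapsto\int_0^{t}e^{-s}\,\mathrm{d}Z_s$ to both sides of this identity. The pathwise term collapses, $\int_0^t e^{-s}\,\mathrm{d}\big(\int_0^s e^u\,\mathrm{d}\tilde B_u\big)=\int_0^t e^{-s}e^{s}\,\mathrm{d}\tilde B_s=\tilde B_t=\int_0^t K_H(t,r)\,\mathrm{d}\tilde W_r$, which accounts exactly for the $K_H$ summand. For the Wiener term, writing $M_s=\int_0^s g(s,r)\,\mathrm{d}\tilde W_r$ (so $M_0=0$), an integration by parts together with a stochastic Fubini argument yields
\[
\int_0^t e^{-s}\,\mathrm{d}M_s=\int_0^t \tilde L(t,r)\,\mathrm{d}\tilde W_r,\qquad \tilde L(t,r):=e^{-t}g(t,r)+\int_r^t e^{-s}g(s,r)\,\mathrm{d}s.
\]
Adding the two contributions gives $Y^{(1)}_t\stackrel{\text{law}}{=}\int_0^t\big(K_H(t,r)+\tilde L(t,r)\big)\,\mathrm{d}\tilde W_r=\tilde G_t$, which is the claimed representation.

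Finally I would verify that $\tilde L$ is a regular Volterra kernel in the sense of Definition \ref{regular}: that $\tilde L(\cdot,s)$ has bounded variation on $(s,T]$ and $\int_0^T |\tilde L|((s,T],s)^2\,\mathrm{d}s<\infty$, so that $\tilde G$ and the associated path-wise/Skorokhod calculus are legitimate. This should follow from the explicit form of $\tilde L$ and the local square integrability of $L$ (hence of $g$) guaranteed by Proposition \ref{timechange}, combined with the regularity of $(u,s)\mapsto\tfrac{\partial K_H}{\partial s}(u,s)$ off the diagonal. I expect the main obstacle to be exactly this last analytic step --- justifying the stochastic Fubini interchange and controlling the bounded variation of the doubly-iterated kernel well enough to place $\tilde L$ in $\mathcal H_{Kr}$ --- since the closed form of $g$ and $L$ is cumbersome; by comparison, the distributional reduction and the collapse of the pathwise term are essentially formal.
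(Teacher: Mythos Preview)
Your approach is essentially the same as the paper's: reduce the equality in law of $X$ and $\tilde X$ to that of $Y^{(1)}$ and $\tilde G$, invoke Proposition~\ref{timechange} with $f(s)=e^{s}$ for the shifted fBm $\tilde B$, and then pass the functional $Z\mapsto\int_0^{\cdot}e^{-s}\,\mathrm{d}Z_s$ through the representation using integration by parts and stochastic Fubini. One point worth noting: the paper pushes the Fubini computation one step further than you do. Writing $h(u,r)=\int_r^u \tfrac{\partial K_H}{\partial s}(u,s)L(s,r)\,\mathrm{d}s$ so that $g(t,r)=\int_r^t e^{u}h(u,r)\,\mathrm{d}u$, one finds
\[
\int_r^t e^{-s}g(s,r)\,\mathrm{d}s=\int_r^t h(u,r)\,\mathrm{d}u-e^{-t}g(t,r),
\]
so your $\tilde L(t,r)=e^{-t}g(t,r)+\int_r^t e^{-s}g(s,r)\,\mathrm{d}s$ collapses to the cleaner form $\tilde L(t,r)=\int_r^t h(u,r)\,\mathrm{d}u$. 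This simplified expression (with $\tilde L(\mathrm{d}t,s)=h(t,s)\,\mathrm{d}t$) is what the paper actually uses downstream in Lemma~\ref{estimator} and the analysis of $I_2^{up}$, and it also makes the bounded-variation property in Definition~\ref{regular} transparent. As for the regularity verification you flag at the end, the paper does not spell it out either; it simply introduces $\tilde L$ by the formula above and asserts that the result follows.
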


\begin{proof} [\textbf{Proof}] 
It is easy to check that the solution of the SDE $(\ref{diffeqXtilde})$ is given by
\begin{equation*}
\tilde{X}_t=\int_0^t e^{-\theta(t-s)}\,\ud \tilde{G}_s, \quad t\in[0,T].
\end{equation*}

Moreover, $\mathbb{E}(X_t)=\mathbb{E}(\tilde{X}_t)=0$, and 

\begin{dmath*}
\text{Cov}_X(t,s)=\mathbb{E}\left(\int_0^t e^{-\theta(t-u)}\,\ud Y^{(1)}_u 
\int_0^s e^{-\theta(s-v)}\,\ud Y^{(1)}_v\right)=\int_0^t\int_0^s e^{-\theta(t-u)-\theta(s-v)}\,
\text{Cov}_{Y^{(1)}}(\ud u,\ud v).
\end{dmath*}

Similarly

\begin{dmath*}
\text{Cov}_{\tilde{X}}(t,s)=\mathbb{E}\left(\int_0^t e^{-\theta(t-u)}\,\ud \tilde{G}_s 
\int_0^s e^{-\theta(s-v)}\,\ud \tilde{G}_v\right)=\int_0^t\int_0^s e^{-\theta(t-u)-\theta(s-v)}\,
\text{Cov}_{\tilde{G}}(\ud u,\ud v).
\end{dmath*}

Therefore, it is enough to show that for the Gaussian processes $Y^{(1)}$ and $\tilde{G}$, we have
\begin{equation*}
\text{Cov}_{Y^{(1)}}(s,t)=\mathbb{E}(Y^{(1)}_s Y^{(1)}_t)= \mathbb{E}(\tilde{G}_s\tilde{G}_t) = 
\text{Cov}_{\tilde{G}}(s,t).
\end{equation*}

To see this, consider the function $f(t)=e^{t}$ and apply proposition \ref{timechange}. Then 
\begin{dmath} \label{B_{a_t}}
 B_{a_t}=\tilde{B}_{\int_0^t e^{\frac{r}{H}}\,\mathrm{d}r} + B_{H} \stackrel{\text{law}}{=}
\int_0^t e^r\,\mathrm{d}\tilde{B}_r + \int_0^t g(t,r)\,\mathrm{d}\tilde{W}_r +B_H
\end{dmath}

where the Volterra-type Kernel $g$ is given by

\begin{equation*}
 g(t,r)=\int_r^t e^u \left(\int_r^u \frac{\partial K_H}{\partial s}(u,s)L(s,r)\,\mathrm{d}s\right)\mathrm{d}u.
\end{equation*}

Now, using integration by parts formula, we obtain

\begin{dmath}\label{Y-one}
 Y^{(1)}_t= \int_0^t e^{-s}\,\ud B_{a_s} \stackrel{\text{law}}{=} \int_0^t e^{-s} \,\ud
\left(\int_0^s e^r\,\mathrm{d}\tilde{B}_r + \int_0^s g(s,r)\,\mathrm{d}\tilde{W}_r +B_H \right)
= \tilde{B}_t + \int_0^t e^{-s} \, \ud \left(\int_0^s g(s,r)\,\mathrm{d}\tilde{W}_r \right)=
\tilde{B}_t + e^{-t} \int_0^t g(t,r)\,\mathrm{d}\tilde{W}_r + 
\int^t_0 \left(\int_0^s g(s,r)\,\mathrm{d}\tilde{W}_r\right) e^{-s}\,\ud s
\end{dmath}


Next, using Fubini theorem for Brownian motion $\tilde{W}$, we see that
\begin{equation*}
A= \int^t_0 \left(\int_0^s g(s,r)\,\mathrm{d}\tilde{W}_r\right) e^{-s}\,\ud s=
\int_0^t \int^t_r e^{-s} g(s,r)\,\ud s \,\ud \tilde{W}_r
\end{equation*}
Put

\begin{equation*}
 h(u,r) = \int_r^u \frac{\partial K_H}{\partial s}(u,v)L(v,r)\,\mathrm{d}v,
\end{equation*}

and using Fubini theorem for real-valued functions, we obtain

\begin{dmath*}
 \int_r^t e^{-s}g(s,r)\,\mathrm{d}s = \int^t_r e^{-s} \int_r^s e^u h(u,r)\,\mathrm{d}u\mathrm{d}s\\
= \int^t_r e^u h(u,r)
\left(\int_u^t e^{-s}\,\mathrm{d}s\right)\mathrm{d}u\\
=\int^t_r h(u,r)\,\mathrm{d}u - e^{-t}\int^t_r e^u h(u,r)\,\mathrm{d}u.
\end{dmath*}

Now, plug this into $A$, we infer that

\begin{dmath*}
A=\int_0^t\int_0^s e^{-s}g(s,r)\,\mathrm{d}s\mathrm{d}\tilde{W}_r =
\int^t_0 \left( \int^t_r h(u,r)\,\mathrm{d}u - 
e^{-t}\int^t_r e^u h(u,r)\,\mathrm{d}u \right)\,\mathrm{d}\tilde{W}_r \,{:=} A_1-A_2
\end{dmath*}

where

\begin{equation}\label{B_1}
 A_1= \int^t_0\int^t_r h(u,r)\,\mathrm{d}u\mathrm{d}\tilde{W}_r,
\end{equation}
and the term $A_2$ can be written as
\begin{equation}\label{B_2}
A_2 = \int^t_0 e^{-t} \int^t_r e^u h(u,r)\,\mathrm{d}u \,\mathrm{d}\tilde{W}_r =
\int^t_0 e^{-t} g(t,r)\,\mathrm{d}\tilde{W}_r.
\end{equation}

Finally, plug in (\ref{B_1}), (\ref{B_2}) and (\ref{representation FBW}) into (\ref{Y-one}), 
we obtain

\begin{dmath*}
 Y^{(1)}_t \stackrel{\text{law}}{=} \int^t_0 \left(K_H(t,s)+\int^t_r h(u,r)\,\mathrm{d}u\right)\,\mathrm{d}\tilde{W}_r.
\end{dmath*}
Therefore, it is enough to introduce the kernel $\tilde{L}$ as
\begin{equation}\label{kernel}
 \tilde{L}(t,r) = \int^t_r h(u,r)\,\mathrm{d}u.
\end{equation}
and then the result follows.
\end{proof}

The next lemma gives an alternative form for the least squares estimator in terms of the process $\tilde{X}$ and Gaussian noise $\tilde{G}$.
\begin{lemma}\label{thetatilde}
 For the least squares estimator $\widehat\theta_T$ we have
\begin{equation}
 \widehat\theta_T \stackrel{\text{law}}{=}\theta-\frac{\int^T_0 \tilde{X}_t\,\delta \tilde{G}_t}{\int^T_0 \tilde{X}^2_t\,\ud t}.
\end{equation}
\end{lemma}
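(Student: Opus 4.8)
The plan is to reduce the estimator to a form that involves only the driving noise, by exploiting the linearity of the Skorokhod (divergence) operator, and then to transport the resulting expression through the distributional identity of Lemma~\ref{alternative_X_t}.

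First I would rewrite the numerator of $\widehat\theta_T$. Since $X$ solves the Langevin equation $(\ref{Langevin})$ with $X_0=0$, we have the pathwise identity $X_t = -\theta\int_0^t X_s\,\ud s + Y^{(1)}_t$, in which the first summand is absolutely continuous in $t$. Interpreting $\delta X_t$ as the Skorokhod differential and using that $\delta$ is linear and that the Skorokhod integral against an absolutely continuous (bounded variation) process collapses to the ordinary Lebesgue integral, I would obtain
\begin{equation*}
\int_0^T X_t\,\delta X_t = -\theta\int_0^T X_t^2\,\ud t + \int_0^T X_t\,\delta Y^{(1)}_t.
\end{equation*}
Substituting this into $(\ref{leastsquare})$ and dividing by $\int_0^T X_t^2\,\ud t$ yields the intermediate identity
\begin{equation*}
\widehat\theta_T = \theta - \frac{\int_0^T X_t\,\delta Y^{(1)}_t}{\int_0^T X_t^2\,\ud t}.
\end{equation*}

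The second step is the transfer. Lemma~\ref{alternative_X_t} supplies the equality in law $Y^{(1)}\stackrel{\text{law}}{=}\tilde G$ at the level of the Gaussian processes, and since $X$ (resp.\ $\tilde X$) is the image of $Y^{(1)}$ (resp.\ $\tilde G$) under the \emph{same} deterministic solution map of the linear equation $(\ref{Langevin})$ (resp.\ $(\ref{diffeqXtilde})$), the pairs $(X,Y^{(1)})$ and $(\tilde X,\tilde G)$ also coincide in law. Because both Skorokhod integrals are built from the same isometry on the common Hilbert space $\mathcal{H}$ and the integrands $X$ and $\tilde X$ are the corresponding measurable functionals, the random variable $\int_0^T X_t\,\delta Y^{(1)}_t$ shares its law with $\int_0^T \tilde X_t\,\delta \tilde G_t$, jointly with the denominator. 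I would therefore conclude
\begin{equation*}
\widehat\theta_T \stackrel{\text{law}}{=} \theta - \frac{\int_0^T \tilde X_t\,\delta \tilde G_t}{\int_0^T \tilde X_t^2\,\ud t},
\end{equation*}
which is the assertion.

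The main obstacle is the rigorous justification that the Skorokhod integral is preserved under the distributional identity of Lemma~\ref{alternative_X_t}. Equality in law of Gaussian processes is a statement about covariances and finite-dimensional distributions, whereas the divergence operator is tied to the ambient Malliavin structure; one must argue that there is a measure-preserving isomorphism of the underlying Gaussian spaces carrying $Y^{(1)}$ to $\tilde G$, under which $\delta$ is covariant, so that divergence integrals of corresponding integrands have equal laws. A secondary care point is the legitimacy of splitting $\int_0^T X_t\,\delta X_t$ into its drift and noise parts, which hinges on the drift of $X$ being absolutely continuous so that its Skorokhod integral reduces to a Riemann integral.
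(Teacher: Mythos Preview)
Your argument is correct and the first step---splitting $\int_0^T X_t\,\delta X_t$ into the drift contribution $-\theta\int_0^T X_t^2\,\ud t$ and the noise contribution $\int_0^T X_t\,\delta Y^{(1)}_t$---is exactly what the paper does (it simply states the resulting identity without elaborating). The difference is in the transfer step. You argue abstractly: since $Y^{(1)}$ and $\tilde G$ share a covariance, there is a measure-preserving isomorphism of the two Gaussian spaces carrying $(X,Y^{(1)})$ to $(\tilde X,\tilde G)$, under which the divergence operator is covariant, so the ratio is transported in law. The paper instead works concretely: it writes $\int_0^T X_t\,\delta Y^{(1)}_t = I_2^{Y^{(1)}}(f)$ with $f(t,s)=\tfrac12 e^{-\theta|t-s|}$, uses the multiplication formula to decompose $\int_0^T X_t^2\,\ud t$ as a deterministic term plus an $I_2^{Y^{(1)}}$ term, does the same on the $\tilde G$ side, and then invokes a moment identity (Lemma~\ref{correlations}, itself a citation to the diagram formulae of Peccati--Taqqu) to show that all mixed moments of the two pairs agree, hence the joint laws coincide. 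Your route is shorter and more conceptual, but---as you yourself note---requires one to articulate precisely why the isomorphism respects $\delta$; the paper's route is more computational but self-contained, needing only that multiple Wiener integrals of the same kernel with respect to Gaussian processes with the same covariance have matching joint moments.
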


\begin{proof}[\textbf{Proof}]
 Note that the least squares estimator $\widehat\theta_T$ is actually equal to 
\begin{equation*}
 \widehat\theta_T = \theta -\frac{\int^T_0 X_t\,\delta Y^{(1)}_t}{\int^T_0 X^2_t\,\ud t}.
\end{equation*}

On the other hand, we have

\begin{equation*}
 \int^T_0 X_t\,\delta Y^{(1)}_t = 
\int^T_0\int^t_0 e^{-\theta(t-s)}\,\delta Y^{(1)}_s\,\delta Y^{(1)}_t= I_2^{Y^{(1)}}(f) 
\end{equation*}

and the function $f$ is given by $f(t,s)=\frac{1}{2}e^{-\theta|t-s|}$. Similarly,

\begin{equation*}
\int^T_0 \tilde{X}_t\,\delta\tilde{G}_t  = 
\int^T_0\int^t_0 e^{-\theta(t-s)}\,\delta \tilde{G}_s\,\delta \tilde{G}_t= I_2^{\tilde{G}}(f).
\end{equation*}

This immediately implies that
\begin{equation*}
 \int^T_0 X_t\,\delta Y^{(1)}_t \stackrel{\text{law}}{=}\int^T_0 \tilde{X}_t\,\delta \tilde{G}_t.
\end{equation*}

Moreover, for every $t \in [0,T]$ there exists a function $g(\cdot,t)$ so that we can write 
\begin{equation*}
X_t=I_1^{Y^{(1)}}\left(g(\cdot,t)\right).
\end{equation*}
Using multiplication formula for multiple stochastic integrals, we obtain

\begin{dmath*}
X_t^2
= \lVert g(\cdot,t)\lVert_{\mathcal{H}}^2 + I_2^{Y^{(1)}}\left(g(\cdot,t) \tilde{\otimes} 
g(\cdot,t)\right)
\end{dmath*}

where $\mathcal{H}$ stands for Hilbert space generated by the covariance function of the Gaussian process $Y^{(1)}$. Therefore, using Fubini theorem

\begin{dmath*}
\int^T_0 X^2_t\,\ud t= \int^T_0 \lVert g(\cdot,t)\lVert_{\mathcal{H}}^2 \,\ud t+ 
I_2^{Y^{(1)}}\left(\int^T_0 \left(g(\cdot,t) \tilde{\otimes} g(\cdot,t)\right)\,\ud t\right)
\end{dmath*}

Similarly, in the same lines one can obtain that

\begin{dmath*}
\int^T_0 \tilde{X}^2_t\,\ud t= \int^T_0 \lVert g(\cdot,t)\lVert_{\tilde{\mathcal{H}}}^2 \,\ud t+ 
I_2^{\tilde{G}}\left(\int^T_0 \left(g(\cdot,t) \tilde{\otimes} g(\cdot,t)\right)\,\ud t\right)
\end{dmath*}

where $\tilde{\mathcal{H}}$ stands for Hilbert space generated by the covariance function of the Gaussian process $\tilde{G}$. Note that $\lVert g(\cdot,t)\lVert_{\mathcal{H}} = \lVert g(\cdot,t)\lVert_{\tilde{\mathcal{H}}}$. Consequently
\begin{equation*}
\int^T_0 X^2_t\,\ud t \stackrel{\text{law}}{=}
\int^T_0 \tilde{X}^2_t\,\ud t.
 \end{equation*}

Finally, by Lemma \ref{correlations} in Appendix, we can deduce that for the following two dimensional random vectors we have

\begin{dmath*}
\left( \int^T_0 X_t\,\delta Y^{(1)}_t , \int^T_0 X^2_t\,\ud t \right) 
\stackrel{\text{law}}{=}\left( \int^T_0 \tilde{X}_t\,\delta \tilde{G}_t ,\int^T_0 \tilde{X}^2_t\,\ud t \right).
\end{dmath*}
We conclude that
\begin{equation*}
\frac{\int^T_0 X_t\,\delta Y^{(1)}_t}{\int^T_0 X^2_t\,\ud t} \stackrel{\text{law}}{=}
\frac{\int^T_0 \tilde{X}_t\,\delta \tilde{G}_t}{\int^T_0 \tilde{X}^2_t\,\ud t}.
\end{equation*}
\end{proof}

In the next lemma, we give an alternative expression of the estimator $\widehat\theta_T$.
\begin{lemma} \label{estimator}
The least squares estimator $\widehat\theta_T$ can be written as 
\begin{dmath*}
\widehat\theta_T \stackrel{\text{law}}{=} - \frac{\frac{1}{2}\tilde{X}_T^2}{\int^T_0 \tilde{X}^2_t\,\ud t}+
\frac{\int^T_0\int^{T}_s D_s^{(\tilde W)} \tilde{X}_t\: 
\left( K_H(\ud t,s)+\tilde{L}(\ud t, s)\right)\,\ud s}{\int^T_0 \tilde{X}^2_t\,\ud t},
\end{dmath*}
where $D^{(\tilde W)}$ denotes the Malliavin derivative operator with respect to Brownian motion $\tilde W$.
\end{lemma}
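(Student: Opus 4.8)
The plan is to begin from the representation furnished by Lemma \ref{thetatilde}, namely
\[
\widehat\theta_T \stackrel{\text{law}}{=} \theta - \frac{\int_0^T \tilde{X}_t\,\delta\tilde{G}_t}{\int_0^T \tilde{X}_t^2\,\ud t},
\]
and to rewrite only the Skorokhod integral in the numerator. The central tool is Proposition \ref{path-sko}, which converts a pathwise (Stratonovich) integral against $\tilde{G}$ into the corresponding Skorokhod integral plus a trace-type correction. Applying it to the Gaussian process $\tilde{G}$, whose Volterra kernel is $K_H+\tilde{L}$, and with integrand $u_t=\tilde{X}_t$, I obtain
\[
\int_0^T \tilde{X}_t\,\ud\tilde{G}_t = \int_0^T \tilde{X}_t\,\delta\tilde{G}_t + \int_0^T \left( \int_s^T D_s^{(\tilde W)}\tilde{X}_t\,\bigl(K_H(\ud t,s)+\tilde{L}(\ud t,s)\bigr) \right)\ud s,
\]
so that the Skorokhod integral equals the pathwise integral minus the correction term.

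Next I would evaluate the pathwise integral $\int_0^T \tilde{X}_t\,\ud\tilde{G}_t$ explicitly. Since the pathwise integral obeys the ordinary chain rule, the defining SDE \eqref{diffeqXtilde} gives $\ud\tilde{G}_t=\ud\tilde{X}_t+\theta\tilde{X}_t\,\ud t$, whence
\[
\int_0^T \tilde{X}_t\,\ud\tilde{G}_t = \int_0^T \tilde{X}_t\,\ud\tilde{X}_t + \theta\int_0^T \tilde{X}_t^2\,\ud t = \tfrac{1}{2}\tilde{X}_T^2 + \theta\int_0^T \tilde{X}_t^2\,\ud t,
\]
using $\int_0^T \tilde{X}_t\,\ud\tilde{X}_t=\tfrac12(\tilde{X}_T^2-\tilde{X}_0^2)$ together with the initial condition $\tilde{X}_0=0$. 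Substituting this into the Stratonovich--Skorokhod relation and then into the expression from Lemma \ref{thetatilde}, the two occurrences of $\theta\int_0^T\tilde{X}_t^2\,\ud t$ cancel after division by $\int_0^T\tilde{X}_t^2\,\ud t$, leaving precisely the claimed identity.

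The main obstacle will be the justification of Proposition \ref{path-sko} in the present setting, that is, checking that its hypotheses hold for the kernel $K_H+\tilde{L}$ and the integrand $\tilde{X}$. Concretely, I would verify that $K_H+\tilde{L}$ is a regular Volterra kernel in the sense of Definition \ref{regular} satisfying the vanishing trace condition $(K_H+\tilde{L})(s^+,s)=0$: for $H>\tfrac12$ one has $K_H(s^+,s)=\lim_{t\to s^+}K_H(t,s)=0$, because the integrand $(u-s)^{H-3/2}$ is integrable at $u=s$, while $\tilde{L}(t,s)=\int_s^t h(u,s)\,\ud u\to 0$ as $t\to s^+$ by \eqref{kernel}, so the sum vanishes as well. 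The remaining point is to confirm that $\tilde{X}\in\mathbb{D}^{1,2}_{\tilde{G}}(\mathcal{H}_{Kr})$, which follows from the explicit representation $\tilde{X}_t=\int_0^t e^{-\theta(t-s)}\,\ud\tilde{G}_s$ and the smoothness of the kernels (the Malliavin derivative with respect to $\tilde{W}$ being linked to that with respect to $\tilde{G}$ through the transfer principle of Proposition \ref{transfer}); this is where the technical effort concentrates, and the standing assumptions $\theta>1$ and $H>\tfrac12$ should supply the required integrability.
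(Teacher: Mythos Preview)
Your proposal is correct and follows essentially the same route as the paper: start from Lemma~\ref{thetatilde}, apply Proposition~\ref{path-sko} to the kernel $K_H+\tilde L$ to trade the Skorokhod integral for the pathwise one plus the trace term, then use the SDE~\eqref{diffeqXtilde} to rewrite $\ud\tilde G_t=\ud\tilde X_t+\theta\tilde X_t\,\ud t$ and cancel the $\theta$'s. The only addition in your write-up is the explicit check of the hypotheses of Proposition~\ref{path-sko} (regularity and $(K_H+\tilde L)(s^+,s)=0$, plus $\tilde X\in\mathbb{D}^{1,2}_{\tilde G}(\mathcal{H}_{Kr})$), which the paper handles by appeal to Lemma~\ref{alternative_X_t} rather than re-verifying here.
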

\begin{proof}[\textbf{Proof}]
Using Lemma \ref{thetatilde}, Proposition \ref{path-sko} and SDE $(\ref{diffeqXtilde})$, we can infer that 
\begin{dmath*}
 \widehat\theta_T \stackrel{\text{law}}{=}\theta-\frac{\int^T_0 \tilde{X}_t\, \delta  \tilde{G}_t}{\int^T_0 
\tilde{X}^2_t\,\ud t} 
= \theta- \frac{\int^T_0 \tilde{X}_t\,\ud 
\tilde{G}_t-\int^T_0\int^{T}_s D_s^{(\tilde W)} \tilde{X}_t\: 
\left( K_H(\ud t,s)+\tilde{L}(\ud t, s)\right)\,\ud s}{\int^T_0 \tilde{X}^2_t\,\ud t} 
= \theta- \frac{\int^T_0 \tilde{X}_t\,
(\mathrm{d}\tilde{X}_t + \theta \tilde{X}_t\,\mathrm{d}t)-\int^T_0\int^{T}_s D_s^{(\tilde W)} \tilde{X}_t\: 
\left( K_H(\ud t,s)+\tilde{L}(\ud t, s)\right)\,\ud s}{\int^T_0 \tilde{X}^2_t\,\ud t} 
= -\frac{\frac{1}{2} \tilde{X}_T^2}{\int^T_0 \tilde{X}^2_t\,\ud t}+
\frac{\int^T_0\int^{T}_s D_s^{(\tilde W)} \tilde{X}_t\: 
\left( K_H(\ud t,s)+\tilde{L}(\ud t, s)\right)\,\ud s}{\int^T_0 \tilde{X}^2_t\,\ud t}.
\end{dmath*}
\end{proof}

In order to prove the theorem, we need the following lemma. The next lemma can be used for an alternative estimator of the parameter $\theta$.

\begin{lemma}\label{denominator}
 For the process $X$ given by $(\ref{X})$ and consequently for the process $\tilde{X}$, as $T$ tends to infinity, we have 
\begin{equation*} 
 \frac{1}{T}\int^T_0 X^2_t\,\ud t \rightarrow \frac{(2H-1)H^{2H}}{\theta}\:B\left( (\theta-1)H+1, 2H-1\right)
\end{equation*}
almost surely and in $L^2$.
\end{lemma}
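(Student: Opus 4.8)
The plan is to compare $X$ with the stationary fractional Ornstein--Uhlenbeck process of the second kind $U$ defined in (\ref{U}), for which a genuine ergodic theorem is available, and then to identify the limiting constant with the stationary variance $\mathbb{E}(U_0^2)$. Since $X_0=0$, both $X$ and $U$ are built from the same underlying fractional Brownian motion and
\begin{equation*}
U_t-X_t = e^{-\theta t}\int_{-\infty}^{0} e^{(\theta-1)s}\,\ud B_{a_s} = e^{-\theta t}\,U_0,
\end{equation*}
so that $X_t = U_t - e^{-\theta t}U_0$ with $U_0$ a fixed, finite-variance Gaussian random variable. Expanding the square,
\begin{equation*}
\frac1T\int_0^T X_t^2\,\ud t = \frac1T\int_0^T U_t^2\,\ud t - \frac{2U_0}{T}\int_0^T e^{-\theta t}U_t\,\ud t + \frac{U_0^2}{T}\int_0^T e^{-2\theta t}\,\ud t.
\end{equation*}
The last term is $O(1/T)$ given $U_0$, and the cross term is also $O(1/T)$ once one notes that $\int_0^\infty e^{-\theta t}U_t\,\ud t$ converges absolutely almost surely, since $\mathbb{E}\int_0^\infty e^{-\theta t}|U_t|\,\ud t = \mathbb{E}|U_0|\int_0^\infty e^{-\theta t}\,\ud t<\infty$ by stationarity. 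Thus the problem reduces to proving $\frac1T\int_0^T U_t^2\,\ud t \to \mathbb{E}(U_0^2)$ almost surely and in $L^2$.

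Next I would compute $\mathbb{E}(U_0^2)$ explicitly. Writing $\ud a_s = e^{s/H}\,\ud s$ and using the covariance identity (\ref{isometryfbm}) in the formal form $\mathbb{E}(\ud B_{a_s}\,\ud B_{a_r}) = \alpha_H |a_s-a_r|^{2H-2}\,\ud a_s\,\ud a_r$ with $\alpha_H = H(2H-1)$ and $a_t = He^{t/H}$, one obtains
\begin{equation*}
\mathbb{E}(U_0^2) = \alpha_H H^{2H-2}\int_{-\infty}^0\!\!\int_{-\infty}^0 e^{(\theta-1)(s+r)}\,\bigl|e^{s/H}-e^{r/H}\bigr|^{2H-2} e^{(s+r)/H}\,\ud s\,\ud r.
\end{equation*}
The substitution $u = e^{s/H}$, $v = e^{r/H}$ (so $s,r\in(-\infty,0]$ become $u,v\in(0,1]$, with Jacobian $H^2/(uv)$) turns this into $\alpha_H H^{2H}\int_0^1\!\int_0^1 (uv)^{(\theta-1)H}|u-v|^{2H-2}\,\ud u\,\ud v$; exploiting symmetry and the inner scaling $u=vw$ identifies the remaining one-dimensional integral as $B((\theta-1)H+1,2H-1)$ and yields exactly $\frac{(2H-1)H^{2H}}{\theta}B((\theta-1)H+1,2H-1)$.

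It then remains to establish the ergodic limit for $U$. Since $U$ is a stationary, centered Gaussian process whose covariance $\rho(t-s)=\text{Cov}(U_t,U_s)$ decays exponentially (the short-range dependence recorded in the Proposition following (\ref{U})), the function $\rho^2$ is integrable over $\R$. For the $L^2$ statement I would use Gaussianity, namely $\text{Cov}(U_t^2,U_s^2)=2\rho(t-s)^2$, to estimate
\begin{equation*}
\text{Var}\!\left(\frac1T\int_0^T U_t^2\,\ud t\right) = \frac{2}{T^2}\int_0^T\!\!\int_0^T \rho(t-s)^2\,\ud s\,\ud t \le \frac{2}{T}\int_{\R}\rho(u)^2\,\ud u = O(1/T),
\end{equation*}
giving $L^2$ convergence to $\mathbb{E}(U_0^2)$. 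For the almost sure statement I would invoke the continuous-time Birkhoff ergodic theorem applied to the stationary (and, by exponential mixing, ergodic) process $\{U_t^2\}_{t\ge0}$.

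The main obstacle I anticipate is twofold. First, carrying out the change of variables cleanly enough to recognise the Beta function with the correct exponents, and justifying the formal covariance identity for the Riemann--Stieltjes integral against $B_{a_s}$ in the regime $H>\tfrac12$. Second, justifying the almost sure convergence rigorously: either by verifying ergodicity of $U$ from the decay of $\rho$ and quoting Birkhoff's theorem, or, if a self-contained route is preferred, by upgrading the $O(1/T)$ variance bound to an almost sure statement via a Borel--Cantelli argument along the integer subsequence $T_n=n$ combined with a control of the oscillation of $\frac1T\int_0^T U_t^2\,\ud t$ between consecutive integers.
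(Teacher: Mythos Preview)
Your proposal is correct and follows essentially the same route as the paper: reduce from $X$ to the stationary process $U$ via the relation $X_t=U_t-e^{-\theta t}U_0$, invoke ergodicity of $U$ (from the exponential decay of its autocovariance) to obtain $\tfrac1T\int_0^T U_t^2\,\ud t\to\mathbb{E}(U_0^2)$, and then identify $\mathbb{E}(U_0^2)$ with the Beta expression by a change of variables. The only cosmetic differences are that the paper passes to the representation $U_0=H^{-(\theta-1)H}\int_0^{a_0}s^{(\theta-1)H}\,\ud B_s$ before applying the isometry (\ref{isometryfbm}), whereas you substitute $u=e^{s/H}$ directly in the double integral, and the paper simply cites the ergodic theorem while you also supply an explicit $O(1/T)$ variance bound for the $L^2$ part.
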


\begin{proof}[\textbf{Proof}]
Note that for every $t \geq 0$, we have
\begin{equation} \label{Y_t}
U_t = e^{-\theta t}\int^t_{-\infty} e^{(\theta-1)s}\,\ud B_{a_s} = X_t+e^{-\theta t}\xi, 
\end{equation}

where $\xi=\int^0_{-\infty} e^{(\theta-1)s} \,\ud B_{a_s}$. For the stationary Gaussian process $U$ we have $($see Proposition 3.11 of \cite{k-s}$)$ 

\begin{equation*}
 \mathbb{E}\left( U_{t} U_{0} \right) = O (e^{-\theta t})\rightarrow 0,
\end{equation*}

as $t$ tends to infinity. Hence, $U$ is ergodic. Therefore, the ergodic theorem implies that

\begin{equation*}
 \frac{1}{T}\int^T_0  U_t ^{2}\,\ud t \rightarrow \mathbb{E}( U_0^{2} )
\end{equation*}

as $T$ tends to infinity, almost surely and in  $L^2$. It is straightforward from (\ref{Y_t}) to check that

\begin{equation} \label{denominator_1}
 \frac{1}{T}\int^T_0 X^2_t\,\ud t \rightarrow \mathbb{E}  (U_{0}^2).
\end{equation}

On the other hand, using change of variable we see that

\begin{dmath*}
 U_t=e^{-\theta t}\int^t_{-\infty} e^{(\theta-1)s}\,\ud B_{a_s}=H^{-(\theta-1)H} e^{-\theta t} \int^{a_t}_0
s^{(\theta-1)H}\,\ud B_s.
\end{dmath*}

Therefore, using formula $(\ref{isometryfbm})$, we obtain

\begin{dmath} \label{denominator_2}
\mathbb{E} (U_{0})^2= H^{-2(\theta-1)H} \mathbb{E}\left( \int_0^{a_0} s^{(\theta-1)H}\,\ud B_s\right)^2
=  H^{-2(\theta-1)H}H(2H-1)\int_0^{a_0}\int_0^{a_0}s^{(\theta-1)H}t^{(\theta-1)H}|s-t|^{2H-2}\,\ud s\ud t
=  \frac{(2H-1)H^{2H}}{\theta} B\left( (\theta-1)H +1, 2H-1\right)
\end{dmath}

Substituting (\ref{denominator_2}) into (\ref{denominator_1}) yields the result.
\end{proof}

\begin{proof}[\textbf{Proof of Theorem \ref{main_theorem}}]

From lemma (\ref{estimator}), we can write $\widehat\theta_T=I_1+I_2$. By Lemma \ref{denominator}, we know
that $\frac{1}{T}\int^T_0 \tilde{X}^2_t\,\ud t$ converges almost surely and in $L^2$, as $T$ tends to infinity to 
a constant. Then, it suffices to study the almost sure convergence of the numerators of $I_1$ and $I_2$.\\

\textit{Step 1: $I_1\rightarrow 0$ almost surely as T tends to infinity.} 
From Lemma $(\ref{Ua.s.})$ in Appendix and the assumption $\theta>1$, we deduce that almost surely
\begin{equation}
 \lim_{T\rightarrow\infty} \frac{\tilde{X}_T^2}{T} = 0.
\end{equation}
\\
\textit{Step 2: $I_2\rightarrow \theta$ almost surely as T tends to infinity.} \\

First note that the Malliavin derivative $D^{(\tilde W)}_s \tilde{X}_t$ with respect to Brownian motion $\tilde{W}$ 
is given by $($see Section 2.2$)$ 

\begin{equation*}
D^{(\tilde W)}_s \tilde{X}_t =\int^t_s e^{-\theta (t-u)}
 \left(\frac{\partial K_H}{\partial u}(u,s)+h(u,s)\right)\,\ud u .
\end{equation*}

Also,

\begin{dmath*}
 K_H(\ud t,s)+\tilde{L}(\ud t,s)=\left(\frac{\partial K_H}{\partial t}
(t,s)+h(t,s)\right)\,\ud t = \left(\frac{\partial K_H}{\partial t}
(t,s)+e^{-t} \phi(t,s)\right)\,\ud t 
= e^{-t}\frac{\partial K_H}{\partial t}
\left( \int^t_0 e^{\frac{u}{H}}\,\ud u,\int^s_0 e^{\frac{u}{H}}\,\ud u \right)e^{\frac{t}{H}}
e^{\frac{s}{2H}}\, \ud t.
\end{dmath*}

Denote the numerator of the term $I_2$ with $ I_2^{up}$. Therefore,

\begin{dmath*}
 I_2^{up}=\frac{1}{T}\int^T_0\int^{T}_s D_s^{(\tilde W)} \tilde{X}_t\: 
\left( K_H(\ud t,s)+\tilde{L}(\ud t, s)\right)\,\ud s 
=\frac{1}{T}\int^T_0\int^{T}_s \int^t_s e^{- \theta (t-u)} e^{-u} \frac{\partial K_H}{\partial u}
\left( \int^u_0 e^{\frac{u}{H}}\,\ud u,\int^s_0 e^{\frac{u}{H}}\,\ud u \right) e^{\frac{u}{H}}
e^{\frac{s}{2H}}\,\ud u \times e^{-t}\frac{\partial K_H}{\partial t}
\left( \int^t_0 e^{\frac{u}{H}}\,\ud u,\int^s_0 e^{\frac{u}{H}}\,\ud u \right)e^{\frac{t}{H}}
e^{\frac{s}{2H}}\, \ud t \, \ud s.
\end{dmath*}

Now, by Fubini theorem for real-valued functions and the Lemma $(\ref{I_{2}^{up}})$ in Appendix, we obtain

\begin{dmath*}
I_2^{up}= \frac{1}{T} \int^T_0 \int^t_0 e^{-\theta(t-u)} e^{-u}e^{\frac{u}{H}} e^{-t} e^{\frac{t}{H}} 
\times \int^{t\wedge u}_0 \frac{\partial K_H}{\partial t}
\left( \int^t_0 e^{\frac{v}{H}}\,\ud v,\int^s_0 e^{\frac{v}{H}}\,\ud v \right) \frac{\partial K_H}{\partial u}
\left( \int^u_0 e^{\frac{v}{H}}\,\ud v,\int^s_0 e^{\frac{v}{H}}\,\ud v \right)\, e^{\frac{s}{H}} \ud s \, \ud u\,\ud t 
= \frac{\alpha_{H}}{T}  \int^T_0 \int^t_0 e^{-\theta(t-u)} e^{-u}e^{\frac{u}{H}} e^{-t} e^{\frac{t}{H}}
\left| \int^u_0 e^{\frac{v}{H}}\,\ud v-\int^t_0 e^{\frac{v}{H}}\,\ud v \right|^{2H-2}\,\ud u\,\ud t     
\end{dmath*}

We make a change of variables $p =\int^u_0 e^{\frac{v}{H}}\,\ud v$ and 
$q =\int^t_0 e^{\frac{v}{H}}\,\ud v$. Hence,

\begin{dmath*}
 I^{up}_2= \frac{\alpha_H}{T} \int^{a_T-H}_0 \int^q_0 \left( 1+\frac{q}{H}\right)^{-H(1+\theta)}
\left(  1+\frac{p}{H}\right)^{H(\theta-1)} |p-q|^{2H-2}\,\ud p\,\ud q =
\frac{\alpha_H H^{2H}}{T} \int^{a_T-H}_0 \int^q_0 \left( q+H\right)^{-H(1+\theta)}
\left(  p+H\right)^{H(\theta-1)} |p-q|^{2H-2}\,\ud p\,\ud q =
\frac{\alpha_H H^{2H}}{T} \int^{a_T}_{a_0} \int^{q+H}_H q^{-H(1+\theta)} p^{H(\theta-1)} 
|p-q|^{2H-2}\,\ud p\,\ud q =
\frac{\alpha_H H^{2H}}{T} \int^{a_T}_{a_0} \frac{1}{q}\int^{1+\frac{H}{q}}_{\frac{H}{q}} 
 y^{(\theta-1)H} (1-y)^{2H-2} \,\ud y\,\ud q
\end{dmath*}

As a result,

\begin{equation*}
 I^{up}_2 \longrightarrow (2H-1)H^{2H} B\left((\theta-1)H+1, 2H-1 \right)
\end{equation*}
and the claim follows.
\end{proof}

\begin{proof}[\textbf{Proof of Theorem \ref{second_main_theorem}}]
Using Lemma \ref{thetatilde}, we have
\begin{equation*}
 \sqrt{T} \left( \widehat{\theta}_{T} - \theta \right) \stackrel{law}{=} - \frac{\sqrt{T} I_{2}^{\tilde{G}}\left( \frac{1}{2} e^{ - \theta \vert t-s \vert} \right)}{ \int^T_0 \tilde{X}^2_t\,\ud t} = - \frac{F_T}{\frac{1}{T}\int^T_0 \tilde{X}^2_t\,\ud t},
\end{equation*}
where $F_T$ stands for the double stochastic integral
\begin{equation*}
F_T = \frac{1}{\sqrt{T}} I_{2}^{\tilde{G}}\left( \frac{1}{2} e^{ - \theta \vert t-s \vert} \right).
\end{equation*}
Therefore, taking into account Lemma \ref{denominator}, it is enough to show that the sequence $\{ F_T\}$ for $T=1,2, \cdots$ converges in law to a Gaussian distribution. To show this, we use Proposition \ref{CLT}. We have 
\begin{equation*}
\begin{split}
\E (F_T)^2 &= \frac{\alpha_{H}^2}{2 T} \int_{[0,T]^4} \Big[ e^{-\theta \left( \vert u_1 - v_1 \vert + \vert u_2 - v_2 \vert \right)} \ e^{(\frac{1}{H} - 1)(u_1 + v_1 + u_2 + v_2)}\\
& \times \Big\vert \int_{0}^{u_2} e^{\frac{x}{H}} \ud x - \int_{0}^{u_1} e^{\frac{x}{H}} \ud x\Big\vert^{2H-2}  \Big\vert \int_{0}^{v_2} e^{\frac{x}{H}} \ud x - \int_{0}^{v_1} e^{\frac{x}{H}} \ud x\Big\vert^{2H-2} \Big]  \ud u_1 \ud u_2 \ud v_1 \ud v_2\\
& := \frac{\alpha_{H}^2}{2 T} H^{4H-4} I_T,
\end{split}
\end{equation*}
where
\begin{equation*}
\begin{split}
I_T&=  \int_{[0,T]^4} \Big[ e^{-\theta \left( \vert u_1 - v_1 \vert + \vert u_2 - v_2 \vert \right)} \ e^{(\frac{1}{H} - 1)(u_1 + v_1 + u_2 + v_2)}\\
& \times \Big\vert  e^{\frac{u_2}{H}} - e^{\frac{u_1}{H}} \Big\vert^{2H-2}  \Big\vert  e^{\frac{v_2}{H}}  - e^{\frac{v_1}{H}} \Big\vert^{2H-2}\Big] \ud u_1 \ud u_2 \ud v_1 \ud v_2.
\end{split}
\end{equation*}
Taking the derivative with respect to $T$, change of variables $ x=T-u_1, y=T-u_2, z=T-v_1$, and taking limit as $T$ tends to infinity, we obtain that
\begin{equation*}
 \begin{split}
  \lim_{T \to \infty} \frac{\ud I_T}{\ud T}  = 4 \int_{[0,\infty)^3} & e^{-\theta y}e^{-\theta \vert z-x \vert} e^{(1- \frac{1}{H})(x+y+z)}\\
 & \times \Big[ \big\vert e^{- \frac{y}{H}} - e^{- \frac{x}{H}} \big\vert \big\vert 1 - e^{- \frac{z}{H}} \big\vert \Big]^{2H-2} \ud x \ud y \ud z
 \end{split}
\end{equation*}
and this integral is finite. Indeed using obvious bound $ e^{-\theta \vert z-x \vert} \le 1 $ and change of variables $ e^{- \frac{x}{H}}=a, e^{- \frac{y}{H}}=b, e^{- \frac{z}{H}}=c$, we infer that, it is smaller than
\begin{equation*}
 \begin{split}
 C(H)& B(1-H,2H-1) \int_{[0,1]^2} (ab)^{-H} b^{\theta H} \vert b -a \vert^{2H-2} \ud a \ud b \\
 & =C(H) B(1-H,2H-1) \Big[ \int_{0}^{1} \int_{0}^{b} + \int_{0}^{1} \int_{b}^{1} (ab)^{-H} b^{\theta H} \vert b -a \vert^{2H-2} \ud a \ud b \Big] \\
 &= C(H) B(1-H,2H-1)\left( J_1 + J_2 \right).
 \end{split}
\end{equation*}
For example, for the term $J_2$ we have
\begin{equation*}
 \begin{split}
 J_2 &= \int_{0}^{1} b^{\theta H -1}\ud b \int_{0}^{1} w^{(\theta -1) H} (1- w)^{2H-2} \ud w \\
 & = B(1 + (\theta -1)H , 2H-1) \int_{0}^{1} a^{\theta H -1}\ud a < \infty.
\end{split}
 \end{equation*}
With similar computation $($see \cite{a-v}, appendix$)$, one can show that
\begin{equation*}
\E \Big[ \Vert D_{s}F_{T}\Vert_{\tilde{\mathcal{H}}}^{2} - \E  \Vert D_{s}F_{T}\Vert_{\tilde{\mathcal{H}}}^{2} \Big]^{2} \to 0 \ \text{as} \ T \to \infty.
\end{equation*}
Taking into account that 
\begin{equation*}
\lim_{T \to \infty} \E  \Vert D_{s}F_{T}\Vert_{\tilde{\mathcal{H}}}^{2}= 2 \lim_{T \to \infty} \E(F_{T}^{2}),
\end{equation*}
we complete the proof.
\end{proof}

\begin{rmk}
Note that if one can replace Skorokhod integral with path-wise Riemann-Stieltjes integral in the formula of least squares estimator, then we can obtain the new estimator 
\begin{equation*}
 \widehat {\theta_{T}^{'}}:=- \frac{\int^T_0 X_t \ud X_t}{\int^T_0 X^2 _t \ud t} \to 0 
\end{equation*}
almost surely by Lemma \ref{denominator} and \ref{Ua.s.} in Appendix, as $T \to \infty$. 
\end{rmk}



\section{appendix}

\begin{lemma}\label{Ua.s.}
 For the stationary Gaussian process $U$ given by $(\ref{Y_t})$ and any $\alpha > 0$, as $T$ tends to infinity, we have 
\begin{equation*}
 \frac{U_T}{T^{\alpha}} \longrightarrow 0 \quad \mathrm{\text{almost surely}}.
\end{equation*}
\end{lemma}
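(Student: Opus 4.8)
The plan is to exploit that $U$ is a centered, stationary, continuous Gaussian process with finite variance $\sigma^2 := \mathbb{E}(U_0^2)$, and to upgrade the trivial $L^2$-convergence to almost sure convergence by combining Gaussian tail estimates with the Borell--TIS concentration inequality through two applications of the Borel--Cantelli lemma. First I would record that, by stationarity, $\mathbb{E}(U_t^2)=\sigma^2$ for every $t$, so each $U_T/T^\alpha$ is a centered Gaussian variable with variance $\sigma^2/T^{2\alpha}\to 0$; this already gives convergence to $0$ in $L^2$, and the whole point is the almost sure refinement. Since the increments inherit the bound $\mathbb{E}(U_t-U_s)^2\le C|t-s|^{2H}$ from the H\"older regularity of $B_{a_\cdot}$ (through the representation of $U$ and the local behaviour of $a_t=He^{t/H}$), Kolmogorov's criterion furnishes a continuous modification and, via Dudley's bound, $M:=\mathbb{E}\big(\sup_{t\in[0,1]}|U_t|\big)<\infty$.

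Next I would reduce the problem to integer times plus an oscillation control. Along integers $U_n\sim\mathcal{N}(0,\sigma^2)$, so the standard Gaussian tail bound gives, for any $\varepsilon>0$,
\[
\mathbb{P}\big(|U_n|>\varepsilon n^\alpha\big)\le 2\exp\Big(-\frac{\varepsilon^2 n^{2\alpha}}{2\sigma^2}\Big).
\]
The key arithmetic observation is that $\sum_{n\ge 1}\exp(-c\,n^{2\alpha})<\infty$ for every $\alpha>0$ and $c>0$ (compare with $\int_1^\infty e^{-c x^{2\alpha}}\,\ud x$, which reduces to a Gamma integral after the substitution $u=x^{2\alpha}$). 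Hence by Borel--Cantelli $U_n/n^\alpha\to 0$ almost surely. This is exactly where the hypothesis ``$\alpha>0$ arbitrary, however small'' is genuinely used: the super-exponential decay of the Gaussian tail beats any polynomial normalisation.

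For the oscillation between consecutive integers, set $M_n:=\sup_{t\in[n,n+1]}|U_t|$. By stationarity $M_n$ has the same law as $M_0$, with $\mathbb{E}(M_n)=M$ and pointwise variance bounded by $\sigma^2$. The Borell--TIS inequality then yields the sub-Gaussian tail
\[
\mathbb{P}\big(M_n-M>u\big)\le\exp\Big(-\frac{u^2}{2\sigma^2}\Big),\qquad u>0,
\]
so that for $n$ large enough that $\varepsilon n^\alpha>2M$ one gets $\mathbb{P}(M_n>\varepsilon n^\alpha)\le\exp(-\varepsilon^2 n^{2\alpha}/(8\sigma^2))$, again summable by the same observation. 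A second pass of Borel--Cantelli gives $M_n/n^\alpha\to 0$ almost surely, and since $|U_t|/t^\alpha\le M_n/n^\alpha$ for $t\in[n,n+1]$, letting $T\to\infty$ concludes $U_T/T^\alpha\to 0$ almost surely.

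The hard part will be the input to the concentration step rather than the Borel--Cantelli bookkeeping: one must certify that $U$ admits a continuous modification and that $\mathbb{E}(\sup_{[0,1]}|U_t|)<\infty$, which I would secure from the increment estimate $\mathbb{E}(U_t-U_s)^2\le C|t-s|^{2H}$ together with Gaussian concentration of the supremum. Everything else is the routine combination of the Gaussian tail, the summability of $\sum e^{-c n^{2\alpha}}$, and the two applications of Borel--Cantelli.
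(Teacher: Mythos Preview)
Your argument is correct and essentially self-contained, but the paper proceeds quite differently. Rather than Borel--Cantelli plus Borell--TIS, the paper computes the precise short-time asymptotic of the covariance $c(t)=\text{Cov}(U_t,U_0)$, showing through a direct evaluation of the double integral (and an incomplete Beta / hypergeometric identity) that
\[
c(t)=c(0)-H\,t^{2H}+o(t^{2H})\quad\text{as }t\to 0^+,
\]
and then invokes Pickands' theorem (Theorem~3.1 in \cite{pi}) on the growth of the maximum of a stationary Gaussian process with locally $2H$-H\"older covariance; that theorem yields $\sup_{[0,T]}|U_t|=O(\sqrt{\log T})$ almost surely, from which $U_T/T^\alpha\to 0$ is immediate. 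Your route trades this sharp covariance expansion for softer inputs (only the qualitative bound $\E(U_t-U_s)^2\le C|t-s|^{2H}$ to feed Kolmogorov/Dudley, then Gaussian concentration), which makes the proof more elementary and avoids the special-function computation and the external black box of Pickands' result. The paper's approach, on the other hand, gives finer information (the exact local coefficient $H$ in front of $t^{2H}$ and, via Pickands, the true growth rate of the supremum), though none of that extra precision is needed for the lemma as stated.
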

\begin{proof}
Using Proposition 3.11 of \cite{k-s}, there exists a constant $C(H,\theta)$ such that
\begin{equation*}\label{lemma1-1}
c(t):= \text{Cov}(U_t,U_0)= C(H,\theta) e^{- \theta t} \left( \int^{a_{t}}_0 \int^{a_{0}}_0 (xy)^{(\theta - 1)H} \vert x-y \vert^{2H-2} \ud x \ud y \right),
\end{equation*}
and the constant $C(H,\theta)= H(2H-1) H^{2H(1 - \theta)}$. Denote the term inside parentheses by $I(t)$. Then with some direct computations one can see

\begin{equation*}
 \begin{split}
  I(t)&= \int_{0}^{a_t} \int_{0}^{\frac{a_0}{a_t}y} (xy)^{(\theta - 1) H} (y - x)^{2H-2}  \ud x \ud y\\
  &+ \int_{0}^{a_0} \int_{0}^{x} (xy)^{(\theta - 1) H} ( x - y)^{2H-2}  \ud y \ud x\\
  &+ \int_{0}^{a_0} \int_{\frac{a_0}{a_t} y}^{y} (xy)^{(\theta - 1) H} (y - x)^{2H-2}  \ud x \ud y\\
  &= \frac{a_{0}^{2\theta H}}{ \theta H} B((\theta - 1)H +1, 2H-1)+ \frac{1}{2\theta H} ( a_{t}^{2 \theta H} -  a_{0}^{2 \theta H} ) \int_{0}^{\frac{a_0}{a_t}} z^{(\theta - 1)H} (1 - z)^{2H -2} \ud z.
 \end{split}
\end{equation*}
Therefore, as $t \to 0^+$, we have 
\begin{equation}\label{lemma1-1}
 \begin{split}
 c(t)&= \frac{(2H-1) H^{2H}}{\theta} B((\theta - 1)H +1, 2H-1) \\
 &- (2H-1)H^{2H} \times t \times \int_{\frac{a_0}{a_t}}^{1} z^{(\theta - 1)H} (1 - z)^{2H -2} \ud z + o(t^{2H}).
 \end{split}
\end{equation}

Let $B_{\text{inc}}(p,q,x)$ and $ _{2}{F}_{1}(a,b,c;x)$ stand for incomplete Beta function and Gauss Hyper geometric function respectively $($see \cite{te} for definition$)$. Using a relation between these two functions $($\cite{te}, page\ $289)$,
 we obtain that as $t \to 0^+$, we have

\begin{equation}\label{lemma1-2}
 \begin{split}
 \int_{\frac{a_0}{a_t}}^{1}  z^{(\theta - 1)H} & (1 - z)^{2H -2} \ud z= B_{\text{inc}}(2H-1,(\theta-1)H, 1 - \frac{a_0}{a_t})\\
 &= \frac{H H^{-2H}}{2H-1} e^{- (\theta +1) t} (a_t - a_0)^{2H-1} \  _{2}{F}_{1} ((\theta +1)H, 1, 2H; 1 - \frac{a_0}{a_t})\\
 &= \frac{H H^{-2H}}{2H-1} t^{2H-1} + o(t^{2H-1}).
 \end{split}
\end{equation}

Substituting $(\ref{lemma1-2})$ in $(\ref{lemma1-1})$, we obtain that as $t \to 0^+$, we have
\begin{equation*}
c(t)= \frac{(2H-1) H^{2H}}{\theta} B((\theta - 1)H +1, 2H-1) - H t^{2H} + o(t^{2H}).
\end{equation*}
Now the claim follows from Theorem 3.1 of \cite{pi}.
\end{proof}

\begin{lemma}\label{correlations}
 Set $U=I_2^{Y^{(1)}}(f)$, $V=I_2^{Y^{(1)}}\left(\int^T_0 \left(g(\cdot,t) \tilde{\otimes} 
g(\cdot,t)\right)\,\ud t\right)$ and $\tilde{U}=I_2^{\tilde{G}}(f)$, $\tilde{V}=I_2^{\tilde{G}}\left(\int^T_0 \left(g(\cdot,t) \tilde{\otimes} 
g(\cdot,t)\right)\,\ud t\right)$. Then

\begin{dmath*}
\mathbb{E}\left( U^{m_1} V^{m_2} \right) = \mathbb{E}\left(\tilde{U}^{m_1} 
\tilde{V}^{m_2}\right) \quad {\forall m_1,m_2 \geq 1}.
\end{dmath*}

\end{lemma}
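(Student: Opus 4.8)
The plan is to deduce the moment identity from the fact that the two driving Gaussian processes are isonormal over one and the same Hilbert space, so that the whole random vector $(U,V)$ has the same law as $(\tilde U,\tilde V)$. First I would recall that, by Lemma \ref{alternative_X_t}, the Gaussian processes $Y^{(1)}$ and $\tilde G$ share the same covariance function; equivalently, the associated Hilbert spaces $\mathcal{H}$ and $\tilde{\mathcal{H}}$ are isometrically isomorphic, the isometry being the identity on elementary functions (this is exactly the observation $\lVert g(\cdot,t)\rVert_{\mathcal{H}}=\lVert g(\cdot,t)\rVert_{\tilde{\mathcal{H}}}$ already used in the proof of Lemma \ref{thetatilde}, now extended to arbitrary inner products). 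Under this identification the two integrands $f$ and $\Phi:=\int_0^T g(\cdot,t)\tilde{\otimes} g(\cdot,t)\,\ud t$ are the same symmetric element of the second tensor power, so that $U,\tilde U$ (respectively $V,\tilde V$) are assembled from identical data and differ only in the underlying process.

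Next I would expand the product $U^{m_1}V^{m_2}$ by repeated application of the multiplication formula for multiple Wiener integrals (the same formula invoked in the proof of Lemma \ref{thetatilde}). Each application replaces a product $I_p^G(a)\,I_q^G(b)$ by a finite sum $\sum_{r} c_{p,q,r}\,I_{p+q-2r}^G(a\tilde{\otimes}_r b)$, in which the combinatorial constants $c_{p,q,r}$ are universal and the contractions $a\tilde{\otimes}_r b$ are defined purely through the inner product of $\mathcal{H}$. Iterating, $U^{m_1}V^{m_2}$ becomes a finite linear combination $\sum_j c_j\,I_{k_j}^G(\Psi_j)$, where each kernel $\Psi_j$ is a successive contraction of copies of $f$ and $\Phi$ and each $c_j$ is a process-independent constant. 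Taking expectations annihilates every chaos of positive order, since $\mathbb{E}\,I_k^G(\Psi)=0$ for $k\ge1$, and leaves only the scalar ($k_j=0$) terms.

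The surviving terms are full contractions of the integrands, that is, finite sums of products of Hilbert-space inner products $\langle\,\cdot\,,\,\cdot\,\rangle_{\mathcal{H}}$ evaluated on $f$ and $\Phi$. Because the isometry $\mathcal{H}\cong\tilde{\mathcal{H}}$ carries $f$ and $\Phi$ to themselves and preserves all inner products, these scalars are identical whether one works with $G=Y^{(1)}$ or $G=\tilde G$; hence $\mathbb{E}(U^{m_1}V^{m_2})=\mathbb{E}(\tilde U^{m_1}\tilde V^{m_2})$ for all $m_1,m_2\ge1$. I expect the only genuinely delicate point to be the bookkeeping that identifies $f$ and $\Phi$ as the very same elements of $\mathcal{H}^{\odot2}=\tilde{\mathcal{H}}^{\odot2}$ under the covariance-induced isometry; once this is in place the remainder is purely formal, and the argument in fact yields the stronger conclusion $(U,V)\stackrel{law}{=}(\tilde U,\tilde V)$, of which the stated moment equality is an immediate consequence.
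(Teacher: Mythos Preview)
Your argument is correct and is essentially the content of the paper's own proof, which simply cites Corollary~7.3.1 of \cite{pe-t}: that result is precisely the diagram/contraction formula asserting that joint moments of multiple Wiener--It\^o integrals depend only on the inner-product structure of the underlying Hilbert space, hence coincide for isonormal processes with the same covariance. You have unpacked what the paper delegates to a reference, and your observation that this in fact yields $(U,V)\stackrel{\text{law}}{=}(\tilde U,\tilde V)$ is exactly how the lemma is used in Lemma~\ref{thetatilde}.
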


\begin{proof}[\textbf{Proof}]
 It immediately follows from Corollary 7.3.1 of \cite{pe-t}.
\end{proof}

The next lemma is used in study of the term $I^{up}_2$.

\begin{lemma}\label{I_{2}^{up}}
 \begin{equation*}
\begin{split}  
\int^{s\wedge t}_0 \frac{\partial K_H}{\partial t}
\left(\int^t_0 e^{\frac{v}{H}}\,\ud v,
\int^u_0 e^{\frac{v}{H}}\,\ud v\right)& \frac{\partial K_H}{\partial s}
\left(\int^s_0 e^{\frac{v}{H}}\,\ud v,
\int^u_0 e^{\frac{v}{H}}\,\ud v\right) e^{\frac{u}{H}}\,\ud u \\
& = H(2H-1)\left| \int_{0}^{t} e^{\frac{v}{H}}  - \int_{0}^{s} e^{\frac{v}{H}} \, \ud v \right|^{2H-2}.
\end{split}
\end{equation*}
\end{lemma}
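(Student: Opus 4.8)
The plan is to recognize the left-hand side as a time-changed instance of the classical second-mixed-derivative identity for the Volterra kernel $K_H$, and then to undo the time change with a single substitution. The identity I will reduce to is
\begin{equation}\label{eq:stdkernelid}
\int_0^{a\wedge b} \frac{\partial K_H}{\partial t}(a,u)\,\frac{\partial K_H}{\partial t}(b,u)\,\ud u = \alpha_H\,|a-b|^{2H-2}, \qquad a\neq b,
\end{equation}
where $\alpha_H = H(2H-1)$. Here I follow the paper's convention that $\frac{\partial K_H}{\partial t}(x,y)$ is the derivative of $K_H$ in its first slot evaluated at $(x,y)$; in particular the symbol $\frac{\partial K_H}{\partial s}$ appearing in the statement denotes exactly the same first-argument derivative, the subscript merely recording which outer variable the first slot depends on.

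First I would establish \eqref{eq:stdkernelid}. Starting from the factorization $R_H(a,b)=\int_0^{a\wedge b} K_H(a,u)K_H(b,u)\,\ud u$ and differentiating once in each variable — taking the derivative in the smaller variable first, so that the boundary contribution carries the factor $K_H(r,r)=0$ and drops out — I obtain $\partial_a\partial_b R_H(a,b)=\int_0^{a\wedge b}\frac{\partial K_H}{\partial t}(a,u)\frac{\partial K_H}{\partial t}(b,u)\,\ud u$. Comparing this with the direct computation $\partial_a\partial_b R_H(a,b)=H(2H-1)|a-b|^{2H-2}$, obtained by differentiating $R_H(a,b)=\tfrac{1}{2}(a^{2H}+b^{2H}-|a-b|^{2H})$ twice, gives \eqref{eq:stdkernelid}.

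Next I would set $\Phi(u):=\int_0^u e^{v/H}\,\ud v$, a strictly increasing $C^1$ map with $\Phi(0)=0$ and $\Phi'(u)=e^{u/H}$. In the integrand of the lemma the two kernel derivatives are evaluated precisely at $(\Phi(t),\Phi(u))$ and $(\Phi(s),\Phi(u))$, while the remaining factor $e^{u/H}\,\ud u$ is exactly $\ud\Phi(u)$. Hence the substitution $w=\Phi(u)$ — together with $\Phi(s\wedge t)=\Phi(s)\wedge\Phi(t)$, valid since $\Phi$ is increasing — turns the left-hand side into $\int_0^{\Phi(t)\wedge\Phi(s)}\frac{\partial K_H}{\partial t}(\Phi(t),w)\frac{\partial K_H}{\partial t}(\Phi(s),w)\,\ud w$. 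Applying \eqref{eq:stdkernelid} with $a=\Phi(t)$ and $b=\Phi(s)$ produces $\alpha_H\,|\Phi(t)-\Phi(s)|^{2H-2}=H(2H-1)\big|\int_0^t e^{v/H}\,\ud v-\int_0^s e^{v/H}\,\ud v\big|^{2H-2}$, which is exactly the asserted right-hand side.

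The one point requiring care is the interchange of differentiation and integration in \eqref{eq:stdkernelid}. For $H>\tfrac{1}{2}$ one has $\frac{\partial K_H}{\partial t}(a,u)=c_H\,(a/u)^{H-1/2}(a-u)^{H-3/2}$, so $\frac{\partial K_H}{\partial t}(a,\cdot)$ fails to be square integrable near $u=a$; nevertheless, for $a\neq b$ the product in \eqref{eq:stdkernelid} has only the integrable singularity of order $(a\wedge b-u)^{H-3/2}$ (with $H-\tfrac{3}{2}>-1$) at the endpoint $u=a\wedge b$, while the other factor remains bounded there. This makes the integral absolutely convergent and legitimizes the Leibniz rule and a dominated-convergence argument off the diagonal — precisely the regime we need, since $t\neq s$ forces $\Phi(t)\neq\Phi(s)$.
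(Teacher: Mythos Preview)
Your proof is correct and follows essentially the same approach as the paper: both arguments differentiate the covariance factorization $R_H(a,b)=\int_0^{a\wedge b}K_H(a,u)K_H(b,u)\,\ud u$ in each variable and combine this with the time change $\Phi(u)=\int_0^u e^{v/H}\,\ud v$. The paper packages this by computing $\E(V_sV_t)$ for $V_t=B_{\Phi(t)}$ in two ways and differentiating, whereas you first isolate the base identity \eqref{eq:stdkernelid} and then substitute---a purely presentational difference, with your added integrability discussion making explicit a justification the paper leaves implicit.
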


\begin{proof}
Put $V_t=B_{\int^t_0 e^{\frac{v}{H}}\,\ud v} $. Note that we can also represent this random variable as Wiener integral in the following way:
\begin{dgroup*}
\begin{dmath*}
V_t=\int^t_0 K_H\left( \int^t_0 e^{\frac{v}{H}}\, \ud v, \int^u_0 e^{\frac{v}{H}} \,\ud v\right)e^{\frac{u}{2H}}\,\ud W_u
\end{dmath*}
\end{dgroup*}
where Brownian motion $W$ is given by $(\ref{BW})$. Therefore
\begin{dmath}\label{covariance_XY}
\mathbb{E}\left( V_s V_t\right)= \int^{s\wedge t}_0 K_H\left( \int^s_0 e^{\frac{v}{H}}\, \ud v, \int^u_0 e^{\frac{v}{H}}\,\ud v\right) K_H\left( \int^t_0 e^{\frac{v}{H}}\, \ud v, \int^u_0 e^{\frac{v}{H}}\,\ud v\right)e^{\frac{u}{H}}\,\ud u.
\end{dmath}

On the other hand,

\begin{dmath}\label{covariance_fbm}
\mathbb{E}\left( V_s V_t\right)=\mathbb{E}\left( B_{\int^s_0 e^{\frac{v}{H}}\,\ud v}  B_{\int^t_0 e^{\frac{v}{H}}\,\ud v}\right)=
R_{H} \Big(\int^s_0 e^{\frac{v}{H}}\,\ud v, \int^t_0 e^{\frac{v}{H}}\,\ud v \Big).
\end{dmath}

With differentiating respect to $s$ and $t$ of the right hand side of (\ref{covariance_fbm}) and (\ref{covariance_XY}), we get the desired identity.

\end{proof}

\section{Acknowledgement}
Ehsan Azmoodeh thanks the Magnus Ehrnrooth foundation for financial support of the major part of the this work that had been done in Helsinki. 
Jos\'e Igor Morlanes thanks Prof. Esko Valkeila for his advice and his financial support via the Academy grant 21245. Both authors thank Tommi Sottinen and Lauri Viitasaari for useful comments and discussions. 

\addcontentsline{toc}{chapter}{\numberline{}Bibliography}

\end{document}